\font\sm=msbm7 scaled \magstep 2
\font\germ=eufm10 scaled 1200
\newtheorem{thm}{Theorem}[section]
\newtheorem{lem}[thm]{Lemma}
\newtheorem{prop}[thm]{Proposition}
\newtheorem{cor}[thm]{Corollary}
\newcommand{\thmref}[1]{Theorem~\ref{#1}}
\newcommand{\lemref}[1]{Lemma~\ref{#1}}
\newcommand{\rmkref}[1]{Remark~\ref{#1}}
\newcommand{\propref}[1]{Proposition~\ref{#1}}
\newcommand{\corref}[1]{Corollary~\ref{#1}}
\newcommand{\secref}[1]{Section~\ref{#1}}
\newcommand{\C}{{\mathbb C}}
\theoremstyle{remark}
\newtheorem{rmk}{Remark}[section]
\DeclareMathOperator{\lcm}{lcm}
\begin{document}
	
	\title[figurate numbers and forms of mixed type]
	{Figurate numbers, forms of mixed type and their representation numbers}
	
	\author{B. Ramakrishnan and Lalit Vaishya}
	\address{(B. Ramakrishnan) Indian Statistical Institute, North-East Centre, Punioni, Solmara, Tezpur 784 501
		Assam, India}
	\address{(Lalit Vaishya) The Institute of Mathematical Sciences, HBNI, CIT Campus, Taramani, Chennai 600 113, India}
	\email[B. Ramakrishnan]{ramki@isine.ac.in, b.ramki61@gmail.com}
	\email[Lalit Vaishya]{lalitvaishya@gmail.com, lalitv@imsc.res.in}
	
	\subjclass[2010]{Primary 11E25, 11F11, 11F30; Secondary 11E20}
	\keywords{Higher figurate numbers, triangular numbers, quadratic forms, modular forms, theta series, generalised eta quotient}
	
	\date{\today}
	
	\maketitle
	
	\begin{abstract} 
		In this article, we consider the problem of determining  formulas for the number of representations of a natural number $n$  by a sum of figurate numbers with certain positive integer coefficients. To achieve this, we prove that the associated generating function gives rise to a modular form of integral weight under certain condition on the coefficients when even number of higher figurate numbers are considered. In particular, we obtain modular property of the generating function corresponding to a sum of even number of triangular numbers with coefficients. We also obtain modularity property of the generating function of mixed forms involving figurate numbers (including the squares and triangular numbers) with coefficients and forms of the type $m^2+mn+n^2$ with coefficients. In particular, we show the modularity of the generating function of odd number of squares and odd number of triangular numbers (with coefficients). As a consequence, explicit formulas for the number of representations of these mixed forms are obtained using a basis of the corresponding space of modular forms of integral weight. We also obtain several applications concerning the triangular numbers with coefficients similar to the ones obtained in \cite{ono}. In \cite{xia}, Xia-Ma-Tian considered some special cases of mixed forms and obtained the number of representations of these 21 mixed forms using the $(p,k)$ parametrisation method. We also derive these 21 formulas using our method and further obtain as a consequence, the $(p,k)$ parametrisation of the Eisenstein series $E_4(\tau)$ and its duplications. It is to be noted that the $(p,k)$ parametrisation of $E_4$ and its duplications were derived by a different method in \cite{{aw},{aaw}}.
		We illustrate our method with several examples. 
	\end{abstract}

	\section{Introduction and Statement of results}
	Finding formulas for the number of representations of a positive integer by certain class of quadratic forms is one of the classical problems in number theory, especially obtaining formulas for the number of representations of a natural number $n$ by a sum of $k$ integer squares, denoted by $r_k(n)$. In this connection, triangular numbers played an important role in finding formulas for $r_k(n)$, $8\vert k$ (see \cite{kohnen-imamoglu}). For a fixed $a \ge 1$, we define the $n^{\rm th}$-figurate number  by  $f_a(n):= \frac{a n^{2} + (a-2)n}{2},$ following Ono-Robins-Wahl \cite{ono}. Note that  the function $f_a(n)$ denotes the $n^{\rm th}$-triangular number (resp.  square number and pentagonal number)  when $a=1$ (resp.  $2$ and $3$). In the literature,  the $n^{\rm th}$ triangular number is denoted by $T_{n}$, and  for a fixed $a \ge 3$, the integer $f_{a}(n)$ is known as  the $n^{\rm th}$ higher figurate number. These functions are associated with counting the number of vertices of  some geometric objects.  We denote the generating functions for these figurate numbers as $\Psi(\tau)$ (for triangular numbers), $\theta(\tau)$ (for square numbers) and $\Phi_{a}(\tau)$ (for $a\ge 3$), and they are given respectively by 
	\begin{equation}\label{fig-gen-fun}
		\begin{split}
			\Psi(\tau) := \sum_{n= 0}^{\infty} q^{\frac{n(n+1)}{2}}, \qquad \theta(\tau)= \sum_{n= 0}^{\infty} q^{n^{2}}, \qquad {\rm and } \qquad {\rm for } ~a \ge 3, \quad \Phi_{a}(\tau) = \sum_{n \in {\mathbb Z}} q^{f_a(n)}.
		\end{split}
	\end{equation}
	where $q = e^{2 i \pi \tau}, \ \ \tau \in {\mathbb H}.$  In 1995,  Ono-Robins-Wahl \cite{ono} considered the problem of determining formulas for the number of representations of a natural number $n$ by $k$ triangular numbers, denoted as $\delta_k(n)$. When $2\vert k$, they used the theory of modular forms of integral weight to get formulas for $\delta_k(n)$. In their work, they have given several applications of the function $\delta_k(n)$.  One of them is the formula for $\delta_{24}(n)$,  involving the Ramanujan tau function $\tau(n)$.  For general $k$, they related $\delta_k(n)$ with the number of representations of $n$ as a sum of $k$ odd square integers. Further, they also obtained applications to  the number of Leech lattice points inside  $k$-dimensional sphere.  In the case of  higher figurate numbers, they proved that the generating function for the higher figurate numbers is given in terms of generalised eta quotients using the Jacobi triple product identity.  In the past few years, there are many works in the literature, which consider the problem of finding formulas for the number of  representations of a natural number by quadratic forms with certain coefficients. We give here a few references to this effect \cite{{ramanujan}, {aalw}, {aalw1}, {w}, {apw}, {xia}, {rss-ijnt}}.

	In a similar vein, in this article, we consider  linear combinations with positive coefficients of these figurate numbers  with $a\neq 2$. More precisely,  we consider the following linear combinations associated to higher figurate numbers with coefficients $c_{i}$ given by ($a \ge 3$);
	\begin{equation}\label{figurate}
		F_{a, {\mathcal C}}({\bf x}) =   \sum_{i=1}^k c_i f_{a}(x_i), \qquad {\rm where} \qquad f_a(n) = \frac{a n^{2} + (a-2)n}{2}, 
	\end{equation}
	and for  triangular numbers given by;
	\begin{equation}\label{triangular}
		T_\mathcal{C}({\bf x}) =   \sum_{i=1}^k c_i \frac {x_{1}(x_{i}+1)}{2}, ~\qquad  {\rm where}  ~\qquad  {\bf x} := (x_{1},x_{2}, \ldots , x_{k})\in {\mathbb Z}^{k},  
	\end{equation}
	and ${\mathcal C} =  (c_1,\cdots, c_k),$ $c_i\le c_{i+1}$.  
	We first prove the modularity property of the generating functions corresponding to the  higher figurate numbers ($a \ge 3$), and the  triangular numbers ($a=1$) under certain conditions. Moreover, as in the work of Ono-Robins-Wahl, we also derive some applications related to the triangular numbers with coefficients.

	Next, we consider, following the work of Xia-Ma-Tian \cite{xia}, a linear combination of  three types of mixed forms and find their corresponding representation formulas. More specifically, in \cite{xia}, they considered the following type of forms in $2m$ variables, which is given as follows.  
	\begin{equation}\label{mixed}
		{\mathcal M}({\bf x}, {\bf y}, {\bf z}) :=  \sum_{i=1}^u a_i (x_{2i-1}^2 + x_{2i-1}x_{2i}+ x_{2i}^2) + \sum_{i=1}^v b_i y_i^2  +
		\sum_{i=1}^k c_i T_{z_i},
	\end{equation}
	where ${\bf x} := (x_{1},x_{2}, \ldots , x_{2u})\in {\mathbb Z}^{2u}$,~ ${\bf y} := (y_{1}, y_{2}, \ldots , y_{v}) \in {\mathbb Z}^v$, ~
	${\bf z} := (z_{1}, z_{2}, \ldots , z_{k})\in {\mathbb N}_0^k$, ~\linebreak ${\mathbb N}_0 = {\mathbb N} \cup \{0\}$. Further,  $2m = 2u + v+ k$; $u,v,k$ are non-negative integers  such that $v+ k$ is even and the coefficients $a_i\in \{1,2,4,8\}$, $b_i\in \{1,2,3,6\}$ and $c_i\in \{1,2,3,4,6\}$. The above is a mixed form consisting of quadratic forms of type $m^2+mn+n^2$,  squares and triangular numbers with coefficients. Though, they defined a more general type of mixed forms ${\mathcal M}({\bf x}, {\bf y}, {\bf z})$, with coefficients $a_i$, $b_i$ and $c_i$ as above, in their work, they actually dealt with 21 mixed forms (and obtained the corresponding representation formulas) with $m=4$ ($ 8$ variable forms).  
	
	In the second part of our work, we consider forms of type \eqref{mixed} (all possible mixed forms) and show that the corresponding generating function is a modular form of weight $m/2$ with precise level and character (depending on the coefficients $a_{i}, b_i$ and $c_{i}$). As examples, we use the theory of modular forms to get the required formulas corresponding to the mixed forms in $4, 6, 8$ and $10$ variables. We obtain these formulas explicitly for some special cases by using suitable basis for the space of modular forms. Moreover, we also obtain the representation formulas for the  21 mixed forms considered  in the work of  Xia-Ma-Tian \cite{xia}, by explicitly constructing a basis for the space of modular forms of weight $4$ on  $\Gamma_{0}(12)$. 
	
	\smallskip
	In \cite{xia}, they  obtained the representation formulas for the 21 mixed forms using the $(p,k)$- parametrisations of the normalised Eisenstein series $E_4(\tau)$ of weight $4$, the Dedekind eta function $\eta(\tau)$ (and their duplications) and by using certain theta function identities.
	In the third part of this article,  we observe that our formulas for the 21 mixed forms (which are obtained by using the theory of modular forms) together with the $(p,k)$-parametrisations of the three generating functions of the  forms appearing in 
	${\mathcal M}({\bf x}, {\bf y}, {\bf z})$ give rise to the $(p,k)$-parametrisations of $E_4(\tau)$ (and its duplications). A detailed presentation of the above is presented  in \S 3. It is to be noted that the
	$(p,k)$ parametrisations of $E_4(d\tau)$, $d\vert 12$, were obtained by a different method in \cite{{aw}, {aaw}}.   We also remark that in \cite{lam}, a few cases of triangular forms with coefficients and mixed forms consisting of squares and triangular numbers with some coefficients have been studied and formulas were obtained using different methods. 
	
	\bigskip
	\noindent 
	Before we state our main results, we fix some notations. \\
	Let $\eta(\tau) := q^{\frac{1}{24}} \displaystyle{\prod_{n=i}^{\infty}} (1-q^{n})$ denote the Dedekind eta-function.  Following the notion of \cite{ono}, let $\eta_{\delta,g}(\tau)$ denote the  generalised  Dedekind eta-function given by 
	\begin{equation}\label{gen-Dedekind-eta}
		\displaystyle \eta_{\delta,g}(\tau) = q^{\frac{P_2(g/\delta)\delta}{2}} \prod_{n \ge 0 \atop {n \equiv g \!\! \pmod {\delta}}}(1-q^{n})\prod_{n \ge 0 \atop {n \equiv -g \!\! \pmod {\delta}}}(1-q^{n}), 
	\end{equation}
	where $P_2(x)$ is the generalised Bernoulli polynomial given by; $P_2(x) = \{x\}^2-\{x\}+\frac{1}{6}$ and $\{x\}$ denotes the fractional part of $x$. 
	
	\begin{rmk}
		For $\delta = 1$ and $g=0,$ we have
		\begin{equation}\label{eta-relation}
			\eta_{1,0}(\tau) = \eta^{2}(z).
		\end{equation}
		A simple calculation shows that for any $ c \ge 1$, we have 
		\begin{equation}\label{duplication-rel}
			\eta_{\delta,g}(c \tau) = \eta_{c\delta,cg}( \tau).
		\end{equation} 
	\end{rmk}
	
	Let $a \ge 3$ be a fixed integer.
	Let $\delta_{k}(n)$ and  $\mathcal{N}_{a}(k;n)$ (for a fixed $a \ge 3$) denote the number of representations of $n$ as a sum of $k$-triangular numbers and as a sum of $k$-higher figurate numbers (for a fixed $a \ge 3$), respectively. 
	Let ${\mathcal C}$ denote the $k$-tuple $(c_1,\cdots, c_k),$ $c_i\le c_{i+1}$. We denote by $\delta_k({\mathcal C};n)$ and  $\mathcal{N}_{a}^{\mathcal C}(k;n)$(for a fixed $a \ge 3$), the number of representations of $n$ as a sum of $k$-triangular numbers and as a sum of $k$- higher figurate numbers (for a fixed $a \ge 3$)  with coefficients $c_i,$ respectively, i.e.,
	\begin{equation*}
		\begin{split}
			\delta_k({\mathcal C};n) &= \#\left\{(x_1,\cdots, x_k) \in {\mathbb Z}^{k} | \ n = \sum_{i=1}^k c_{i}\frac{x_{i}(x_{i}+1)}{2} \right\} \\ 
			{\rm and ~ for ~ a~ fixed ~ integer~ a \ge 3, } \quad \mathcal{N}_{a}^{\mathcal C}(k;n) &= \#\left\{(x_1,\cdots, x_k) \in {\mathbb Z}^{k} | \ n = \sum_{i=1}^k c_{i} f_{a}(x_i) \right\}. \qquad  \qquad  \qquad  \qquad  \qquad 
		\end{split}
	\end{equation*}
	In particular, if all $c_{i}=1$ then $\delta_k({\mathcal C};n) = \delta_{k}(n) $ and $\mathcal{N}_{a}^{\mathcal C}(k;n) = \mathcal{N}_{a}(k;n)$. We denote the generating function for $\delta_k({\mathcal C};n)$ and  $\mathcal{N}_{a}^{\mathcal C}(k;n)$  by $\Psi_{\mathcal C}(\tau)$ and $\Phi_a^{\mathcal C}(\tau)$, respectively. These are given by:
	\begin{equation}\label{gen-coeff}
		\begin{split}
			\Psi_{\mathcal C}(\tau) &= \prod_{i=1}^k \Psi(c_i \tau) = \sum_{n=0}^\infty \delta_k({\mathcal C};n) q^n \\
			{\rm and} \quad \Phi_a^{\mathcal C}(\tau)  &= \prod_{i=1}^k \Phi_{a}(c_{i} \tau)= \sum_{n=0}^\infty \mathcal{N}_{a}^{\mathcal C}(k;n) q^n.
			\\
		\end{split}
	\end{equation}
	It is known  from \cite[Proposition 1, Theorem 10]{ono}  that the generating  functions for the figurate numbers defined in \eqref{fig-gen-fun} are explicitly obtained in terms of eta quotients and generalised eta quotients, i.e., 
	\begin{equation}\label{Gen-function}
		\begin{split}
			\Psi(\tau) = q^{-1/8} \frac{\eta^{2}(2\tau)}{\eta(\tau)}, \quad \theta(\tau)=  \frac{\eta^5(2 \tau)}{\eta^2( \tau)\eta^2(4 \tau)}, ~ {\rm and ~ for ~ a \ge 3,} ~ \Phi_{a}(\tau) = q^{-\frac{(a-2)^2}{8a}} \eta(a\tau) \frac{\eta_{a,1}(2\tau)}{\eta_{a,1}(\tau)},
		\end{split}
	\end{equation}
	where $\eta(\tau)$ is the Dedekind eta function and $\eta_{\delta,g}(\tau)$ is the generalised Dedekind eta function defined by  \eqref{gen-Dedekind-eta}.  So, the generating functions $\Psi_{\mathcal C}(\tau)$ and $\Phi_a^{\mathcal C}(\tau)$ are given in terms of Dedekind eta quotient and generalised Dedekind eta quotient, respectively, i.e.,
	\begin{equation}\label{triangular-gen-fun}
		\begin{split} 
			\Psi_{\mathcal C}(\tau)  & = \prod_{i=1}^k \Psi(c_i \tau) ~
			= q^{-{\bf h}/8} \prod_{i=1}^k \frac{\eta^2(2c_i \tau)}{\eta(c_i \tau)} \\
		\end{split}
	\end{equation}
	and for $a\ge 3$,
	\begin{equation}\label{figurate-gen-fun}
		\begin{split} 
			\Phi_a^{\mathcal C}(\tau) & = \prod_{i=1}^k \Phi_{a}(c_{i} \tau) = q^{-\frac{(a-2)^2}{8a}{\bf h}} \prod_{i=1}^k \eta(c_i a\tau) \prod_{i=1}^k \frac{\eta_{a,1}(2c_i\tau)}{\eta_{a,1}(c_i \tau)}
		\end{split}
	\end{equation}
	where ${\bf h}= c_1 + c_2 + \cdots + c_k$. 
	Let ${\bf c}:= c_1 c_2  \cdots c_k$ . Our first result states that for a fixed positive integer $a \not = 2$,~ $\Phi_a^{\mathcal C}(\tau)$ is a modular form under certain conditions. More precisely, we have the following theorem.
	
	\begin{thm}\label{modularity}
		Let $a \not=2 $ be a fixed positive integer and $k \in 2{\mathbb Z}^{+}$. 
		Let $\Phi_a^{\mathcal C}(\tau)$ be the generating function for $\mathcal{N}_{a}^{\mathcal C}(k;n)$ (when $a\ge 3$) and 
		$\Psi_{\mathcal C}(\tau)$ be the generating function for $\delta_k({\mathcal C};n)$ (when $a=1$). Let $F(\tau)$ be one of these 
		generating functions (depending on $a\ge 3$ or $a=1$). 
		Then, $q^{\frac{(a-2)^2}{8a} {\bf h}} F(\tau)$ is a 
		modular form of weight $\frac{k}{2}$ on the congruence subgroup $\Gamma_0(N)$ (given explicitly in terms of $a$ and $c_i$) if, and only if, 
		${\bf h} \equiv 0 \pmod{\frac{8a}{d}}$ where $d = \gcd((a-2)^2,4a)$, $N = 2a \lcm[c_1, \ldots,c_k]$ when $a$ is odd and $N = a \lcm[c_1, \ldots,c_k]$ when $a$ is even, and the character $\chi$  is given by $\chi = \left(\frac{(-1)^{k/2} 4{\bf c}}{\cdot}\right)$.
	\end{thm}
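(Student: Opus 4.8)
The plan is to use the explicit (generalized) eta-quotient expressions \eqref{triangular-gen-fun} and \eqref{figurate-gen-fun} together with the known transformation laws of the Dedekind eta function and of the generalized Dedekind eta function \eqref{gen-Dedekind-eta}, so that modularity of $G(\tau):=q^{\frac{(a-2)^2}{8a}{\bf h}}F(\tau)$ on $\Gamma_0(N)$ reduces to a finite list of arithmetic conditions on $a$ and $\mathcal{C}$. First I would normalise everything to the variable $\tau$ by clearing the scalings $c_i\tau,\,2c_i\tau$ via the duplication relation \eqref{duplication-rel}: for $a\ge 3$ this turns \eqref{figurate-gen-fun} into
\begin{equation*}
G(\tau)=\prod_{i=1}^k\eta(ac_i\tau)\prod_{i=1}^k\frac{\eta_{2ac_i,\,2c_i}(\tau)}{\eta_{ac_i,\,c_i}(\tau)},
\end{equation*}
a generalized eta quotient purely in $\tau$, while for $a=1$ the expression \eqref{triangular-gen-fun}, via \eqref{eta-relation}, is the ordinary eta quotient $G(\tau)=\prod_{i=1}^k \eta^2(2c_i\tau)/\eta(c_i\tau)$. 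The case $a=1$ is then handled directly by Ligozat's criteria for ordinary eta quotients, and the case $a\ge 3$ by the analogous transformation theory for generalized eta quotients (of Robins/Biagioli type); I would present both under one roof.

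For the weight and character I would count contributions factor by factor. Each $\eta(ac_i\tau)$ contributes weight $1/2$, and the ratio $\eta_{2ac_i,2c_i}/\eta_{ac_i,c_i}$ contributes weight $0$ (numerator and denominator have equal weight), so $G$ has weight $k/2$ with no further hypotheses. The character is obtained by combining the Ligozat character $\left(\frac{(-1)^{k/2}\prod_i(ac_i)}{\cdot}\right)=\left(\frac{(-1)^{k/2}{\bf c}}{\cdot}\right)$ of the ordinary part (using that $k$ is even, so $a^k$ is a square) with the multiplier of the generalized eta ratio, the latter supplying exactly the factor that upgrades this to $\chi=\left(\frac{(-1)^{k/2}4{\bf c}}{\cdot}\right)$ (the two agree up to the trivial symbol $\left(\frac{4}{\cdot}\right)$). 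The level $N$ is then determined by requiring $\Gamma_0(N)$-invariance of the full quotient: the ordinary factors $\eta(ac_i\tau)$ force $a\lcm[c_1,\dots,c_k]\mid N$, while the generalized eta ratio dictates whether an additional factor of $2$ is needed; tracking this yields $N=2a\lcm[c_1,\dots,c_k]$ for $a$ odd and the reduced $N=a\lcm[c_1,\dots,c_k]$ for $a$ even, the drop reflecting extra $2$-divisibility (and cancellation in the ratio) available when $a$ is even.

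The arithmetic heart of the statement is the congruence. A direct computation of the leading $q$-exponent of $G$ (summing the $ac_i/24$ coming from $\eta(ac_i\tau)$ and the $P_2$-contributions $\tfrac12 P_2(1/a)(2ac_i-ac_i)$ from the generalized eta ratio, using \eqref{gen-Dedekind-eta}) collapses to $\frac{(a-2)^2}{8a}{\bf h}$, which is precisely why the normalising prefactor is chosen as in the theorem; hence $G$ has an integer-power $q$-expansion at $\infty$ only when $8a\mid(a-2)^2{\bf h}$. Full modularity demands more than integrality at $\infty$: the entire multiplier system of the generalized eta quotient must collapse to the Dirichlet character $\chi$, and tracking the residual root of unity through the transformation law shows this occurs exactly when ${\bf h}\equiv 0\pmod{8a/d}$ with $d=\gcd((a-2)^2,4a)$. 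This is strictly stronger than bare integrality in some cases (for instance $a=6$, where integrality needs only $3\mid{\bf h}$ but modularity needs $6\mid{\bf h}$), and it establishes both directions of the ``if and only if''. The final routine step is to verify holomorphy at every cusp of $\Gamma_0(N)$ via the standard order-of-vanishing formulas for (generalized) eta quotients, which, together with holomorphy and non-vanishing of the eta factors on ${\mathbb H}$, promotes $G$ to a holomorphic modular form of weight $k/2$ on $\Gamma_0(N)$ with character $\chi$.

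I expect the main obstacle to be the precise bookkeeping of the generalized eta multiplier system: isolating the residual root of unity in the transformation of $\prod_i \eta_{2ac_i,2c_i}/\eta_{ac_i,c_i}$ under a general element of $\Gamma_0(N)$, showing that it is killed exactly by the condition ${\bf h}\equiv 0\pmod{8a/d}$, and confirming that what survives is the claimed Kronecker character rather than a proper (non-Dirichlet) multiplier. By contrast, the ordinary-eta case $a=1$ and the weight count are comparatively mechanical; the generalized-eta character-and-congruence analysis is where the real work lies.
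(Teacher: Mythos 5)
Your outline is essentially the paper's own proof: the paper likewise uses the duplication relation to rewrite $q^{\frac{(a-2)^2}{8a}{\bf h}}\Phi_a^{\mathcal C}(\tau)$ as the generalized eta quotient $\prod_{i=1}^k\eta_{ac_i,0}^{1/2}(\tau)\prod_{i=1}^k\eta_{2ac_i,2c_i}(\tau)/\eta_{ac_i,c_i}(\tau)$, invokes Robins' theorem (Theorem A) for $a\ge3$ and the Dummit--Kisilevsky--McKay/Ono eta-quotient criterion for $a=1$, and reads off weight, level and character from those criteria. The route is the same; the problems lie in the two steps you defer, which are exactly where the paper's actual content sits.

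First, the cusp verification is not ``routine.'' Orders of generalized eta quotients at cusps are frequently negative, and the only substantive work in the paper beyond citing Robins is this check: it splits into the cases $\gcd(2ac_i,\epsilon)=2\gcd(ac_i,\epsilon)$, where $P_2\ge-\tfrac{1}{12}$ suffices, and $\gcd(2ac_i,\epsilon)=\gcd(ac_i,\epsilon)$, where one needs the dedicated positivity lemma that $f(x)=\tfrac{1}{12}+\tfrac12 P_2(2x)-P_2(x)\ge 0$ (a periodic function equal to $x^2$, $(x-1)^2$ or $\tfrac14$ on $[0,1]$). Without supplying this inequality your plan does not close. Second, your account of the congruence is inconsistent with the tool you invoke. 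In Robins' theorem there is no residual multiplier obstruction beyond conditions (i) and (ii): condition (i) computes to $\frac{(a-2)^2}{4a}{\bf h}\equiv 0\pmod 2$, i.e.\ precisely integrality of the shifted exponents, $8a\mid (a-2)^2{\bf h}$, and once (i) holds and the cusp orders are non-negative the function \emph{is} a modular form. Hence no condition strictly stronger than integrality can be ``needed,'' contrary to your claim: in your own example $a=6$, if $3\mid{\bf h}$ but $6\nmid{\bf h}$, Robins' hypotheses are still satisfied and modularity follows. What your example actually exposes is that $8a\mid(a-2)^2{\bf h}$ is equivalent to ${\bf h}\equiv 0\pmod{8a/\gcd((a-2)^2,8a)}$, which for $a\equiv 2\pmod 4$ differs from the stated ${\bf h}\equiv 0\pmod{8a/d}$ with $d=\gcd((a-2)^2,4a)$; this is a discrepancy between the theorem's statement and the paper's own derivation, not evidence of a hidden multiplier condition, and your proposal should resolve it by correcting the gcd rather than by asserting an unproven transformation-law computation.
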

	
	\smallskip
	
	\begin{rmk}\label{rmk:1}
		When $a=1$, the above theorem gives modular property of the generating function $\Psi_{\mathcal C}(\tau)$. More precisely, it states that  $q^{{\bf h}/8} \Psi_{\mathcal C}(\tau)$ is a modular form of weight $k/2$ on $\Gamma_0(N)$ with character 
		$\chi_{1} = \left(\frac{(-1)^{k/2} 4{\bf c}}{\cdot}\right)$ if, and only if,  ${\bf h} \equiv 0 \pmod{8}$, where $N = 2\cdot \lcm[c_1, c_2, \dots, c_k]$ (with ${\bf h}$ and ${\bf c}$ as defined above). 
	\end{rmk}
	
	\smallskip
	
	The mixed form given by \eqref{mixed} has three components which are linear combinations of quadratic forms of type  $m^{2} + mn + n^{2}$, squares and triangular numbers. For the second part of the paper, we take two sub classes of the above mixed forms which are combinations of triangular numbers with squares or forms of the type $m^{2} + mn + n^{2}$. We give a notation for these combinations in the following. 
	\begin{equation}\label{st-lt}
		\begin{split}
			{\mathcal M }_{l, t}({\bf x}, {\bf z}) & = \sum_{i=1}^u a_i (x_{2i-1}^2 + x_{2i-1}x_{2i}+ x_{2i}^2)  + \sum_{i=1}^k c_i T_{z_i}, 
			\quad k {\rm ~is~even~},\\
			{\mathcal M }_{s, t}({\bf y}, {\bf z}) & =  \sum_{i=1}^v b_i y_i^2  + \sum_{i=1}^k c_i T_{z_i}, \quad v+k {\rm ~is~even~}, 
		\end{split}
	\end{equation}
	with ${\bf x}, {\bf y}, {\bf z}, a_i, b_i, c_i$ as in \eqref{mixed}. The generating function for the mixed form ${\mathcal M}_{l,t}({\bf x}, {\bf z})$ is given by 
	$$
	\psi_{l,t}(\tau) = \prod_{i=1}^u {\mathcal F}(a_i \tau) \prod_{j=1}^{k} \Psi(c_{j}\tau),
	$$
	where  ${\mathcal F}(\tau) =\displaystyle{ \sum_{m,n\in {\mathbb Z}}} q^{m^2+mn+n^2}$. As $k$ is even, by 
	\rmkref{rmk:1}, we know that $q^{{\bf h}/8} \prod_{j=1}^{k} \Psi(c_{j}\tau) = q^{{\bf h}/8} \Psi_{\mathcal C}(\tau)$ is a modular form when ${\bf h} = \sum_ic_i$ is a multiple of $8$. Further, ${\mathcal F}(\tau)$ is a modular form in $M_1(\Gamma_0(3), \big(\frac{\cdot}{3}\big))$, which follows from \cite[Theorem 4]{schoen}. Combining these two modular properties, we have the following theorem, which gives the modular property of $\psi_{l,t}(\tau)$.
	
	\begin{thm}\label{lt}
		For an even integer $k\ge 2$, let $\psi_{l,t}(\tau)$ be the generating function for the mixed form ${\mathcal M }_{l, t}({\bf x}, {\bf z})$ 
		in $2u+k$ variables defined in \eqref{st-lt}.  Then $q^{{\bf h}/8} \psi_{l,t}(\tau)$ is a 
		modular form in $M_{u+k/2}(\Gamma_0(L), \chi')$, where $L = \lcm[3~ \lcm
		[a_1, \ldots, a_u], 2~\lcm[c_1, \ldots, c_k]]$ and 
		$\chi' = \chi$, if $u$ is even and $\chi' = \big(\frac{\cdot}{3}\big) \chi$, if $u$ is odd, provided ${\bf h}$ is a multiple of $8$.
		Here, $\chi$ is the character as defined in \thmref{modularity}.
	\end{thm}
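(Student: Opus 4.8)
The plan is to realise $q^{{\bf h}/8}\psi_{l,t}(\tau)$ as a product of two modular forms whose individual modularity is already in hand, and then to read off the weight, level and character from the standard multiplicativity of spaces of modular forms. I would start from the factorisation
\[
q^{{\bf h}/8}\psi_{l,t}(\tau) = \left(\prod_{i=1}^u {\mathcal F}(a_i\tau)\right)\cdot\left(q^{{\bf h}/8}\prod_{j=1}^k \Psi(c_j\tau)\right),
\]
and treat the two bracketed factors separately.

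For the first factor, recall from \cite[Theorem 4]{schoen} that ${\mathcal F}(\tau)\in M_1(\Gamma_0(3),\big(\tfrac{\cdot}{3}\big))$. The first step is to apply the scaling operator $V_{a_i}\colon f(\tau)\mapsto f(a_i\tau)$, which sends $M_1(\Gamma_0(3),\big(\tfrac{\cdot}{3}\big))$ into $M_1(\Gamma_0(3a_i),\big(\tfrac{\cdot}{3}\big))$ with the nebentypus unchanged. Taking the product over $i=1,\ldots,u$ and using that a product of modular forms of weights $k_1,k_2$ and levels $N_1,N_2$ lies in weight $k_1+k_2$ on $\Gamma_0(\lcm[N_1,N_2])$ with the product character, I obtain
\[
\prod_{i=1}^u {\mathcal F}(a_i\tau)\in M_u\!\left(\Gamma_0\big(3\,\lcm[a_1,\ldots,a_u]\big),\big(\tfrac{\cdot}{3}\big)^u\right).
\]

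For the second factor, \rmkref{rmk:1} (the $a=1$ case of \thmref{modularity}) gives, under the hypothesis $8\mid{\bf h}$, that $q^{{\bf h}/8}\prod_{j=1}^k\Psi(c_j\tau)\in M_{k/2}(\Gamma_0(2\,\lcm[c_1,\ldots,c_k]),\chi)$ with $\chi=\big(\tfrac{(-1)^{k/2}4{\bf c}}{\cdot}\big)$. Multiplying the two factors and applying the same product rule, the weight becomes $u+k/2$, the level becomes $L=\lcm[3\,\lcm[a_1,\ldots,a_u],2\,\lcm[c_1,\ldots,c_k]]$, and the character becomes $\big(\tfrac{\cdot}{3}\big)^u\chi$, exactly the claimed triple.

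The only genuinely non-formal point is the collapse of the character, and this is where I would be careful. Since $\big(\tfrac{n}{3}\big)\in\{-1,0,1\}$ and equals $\pm1$ whenever $n$ is coprime to $3$, one has $\big(\tfrac{\cdot}{3}\big)^u$ equal to the principal character when $u$ is even and equal to $\big(\tfrac{\cdot}{3}\big)$ when $u$ is odd; this yields $\chi'=\chi$ for even $u$ and $\chi'=\big(\tfrac{\cdot}{3}\big)\chi$ for odd $u$. Two routine verifications round out the argument: that $V_{a_i}$ preserves the nebentypus with no extra twist appearing (in contrast to the $U$-operators), and that holomorphy at all cusps is inherited from the two factors, so that the product is a genuine holomorphic modular form and not merely a weakly holomorphic one. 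Neither presents a real obstacle; the substance of the theorem is entirely carried by the two inputs already recorded, namely the modularity of ${\mathcal F}$ and \rmkref{rmk:1}.
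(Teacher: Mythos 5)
Your proposal is correct and follows essentially the same route as the paper: the paper's proof of \thmref{lt} is precisely the combination, stated in the paragraph preceding the theorem, of \rmkref{rmk:1} (the $a=1$ case of \thmref{modularity}) for the triangular factor with Schoeneberg's result that ${\mathcal F}(\tau)\in M_1(\Gamma_0(3),\big(\tfrac{\cdot}{3}\big))$, glued together by the standard $V_d$-operator and product-of-forms rules. Your explicit treatment of the level bookkeeping and the collapse of $\big(\tfrac{\cdot}{3}\big)^u$ according to the parity of $u$ matches exactly what the paper leaves implicit.
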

	
	Next, we consider the mixed form ${\mathcal M}_{s,t}({\bf y}, {\bf z})$ in $v+k$ variables with $2\vert (v+k)$, and denote its 
	generating function by $\psi_{s,t}(\tau)$. Then we have 
	\begin{equation}\label{gen;st}
		\psi_{s,t}(\tau) =  \prod_{i=1}^{v} \theta(b_{i}\tau) \prod_{j=1}^{k} \Psi(c_{j}\tau),
	\end{equation}
	where $\theta(\tau)$ is the classical theta function given by $\theta(\tau) = \sum_{n\in {\mathbb Z}} q^{n^2}$. In the next result, we shall obtain the modularity of the function $\psi_{s,t}(\tau)$. To derive this, we use the fact that both the generating functions that appear on the right hand-side of \eqref{gen;st} are expressed as eta-quotients. 
	
	\begin{thm}\label{odd-triangular}
		Let $v$ and $k$ be positive integers such that $v+k$ is even and ${\bf h}$ be the sum of the coefficients $c_i$. 
		Set $M = \lcm[4~\lcm[b_1, \ldots, b_v], 2~\lcm[c_1, \ldots, c_k]]$. 
		Then the function $q^{{\bf h}/8} \psi_{s,t}(\tau)$ is a modular form in $M_{(v+k)/2}(\Gamma_0(M), \chi'')$, 
		if, and only if, ${\bf h }\equiv 0 \pmod {8}$,
		where  
		$$
		\chi'' = \begin{cases}
			\left(\frac{(-1)^{(v+k)/2} 4 \prod_{i=1}^{v} b_{i}\prod_{j=1}^k c_{j}} {\cdot}\right),& {\rm ~if~} v {\rm ~and~} k {\rm ~are~even}, \\   
			\left(\frac{(-1)^{(v+k)/2} 8 \prod_{i=1}^{v} b_{i}\prod_{j=1}^k c_{j}} {\cdot}\right),& {\rm ~if~} v {\rm ~and~} k {\rm ~are~odd}. \\   
		\end{cases}
		$$  
	\end{thm}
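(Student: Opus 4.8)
The plan is to reduce $q^{{\bf h}/8}\psi_{s,t}(\tau)$ to a single eta quotient and then apply the standard transformation and holomorphy criteria for eta quotients (Ligozat; cf.\ \cite{ono}), exactly in the spirit of the proof of \thmref{modularity}. Substituting the expressions for $\theta$ and $\Psi$ from \eqref{Gen-function} into \eqref{gen;st}, the prefactors $q^{-c_j/8}$ coming from the $k$ factors $\Psi(c_j\tau)$ multiply to $q^{-{\bf h}/8}$, which is cancelled by $q^{{\bf h}/8}$. Hence
\[
g(\tau) := q^{{\bf h}/8}\psi_{s,t}(\tau) = \prod_{i=1}^{v}\frac{\eta^5(2b_i\tau)}{\eta^2(b_i\tau)\,\eta^2(4b_i\tau)}\prod_{j=1}^{k}\frac{\eta^2(2c_j\tau)}{\eta(c_j\tau)} = \prod_{\delta\mid M}\eta(\delta\tau)^{r_\delta},
\]
an honest eta quotient all of whose arguments $b_i,2b_i,4b_i,c_j,2c_j$ divide $M=\lcm[4\,\lcm[b_1,\dots,b_v],\,2\,\lcm[c_1,\dots,c_k]]$. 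Reading off the exponents, each $\theta(b_i\tau)$ contributes $5-2-2=1$ and each $\Psi(c_j\tau)$ contributes $2-1=1$ to $\sum_\delta r_\delta$, so $\sum_\delta r_\delta = v+k$ and the weight is $(v+k)/2$, an integer since $v+k$ is even.

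Next I would verify Ligozat's two congruence conditions. A direct computation gives $\sum_\delta \delta\, r_\delta = 0 + 3\sum_j c_j = 3{\bf h}$, the $\theta$-contribution cancelling because $2b_i\cdot 5 + b_i\cdot(-2) + 4b_i\cdot(-2)=0$; thus the first condition $\sum_\delta \delta\, r_\delta\equiv 0\pmod{24}$ is equivalent to $3{\bf h}\equiv 0\pmod{24}$, i.e.\ to ${\bf h}\equiv 0\pmod 8$. For the second condition one finds that $\sum_\delta (M/\delta)r_\delta$ vanishes identically: within each $\theta$-block $\frac{M}{b_i}(-2)+\frac{M}{2b_i}(5)+\frac{M}{4b_i}(-2)=\frac{M}{b_i}\bigl(-2+\tfrac52-\tfrac12\bigr)=0$, and within each $\Psi$-block $\frac{M}{c_j}(-1)+\frac{M}{2c_j}(2)=0$. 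Hence the second congruence holds unconditionally, and both Ligozat congruences hold if and only if $8\mid{\bf h}$.

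It then remains to check holomorphy at the cusps of $\Gamma_0(M)$. Rather than evaluating the cusp-order formula $\frac{M}{24}\sum_{\delta\mid M}\frac{\gcd(c,\delta)^2 r_\delta}{\gcd(c,M/c)\,c\,\delta}$ for every $c\mid M$, I would observe that each individual factor is already holomorphic at every cusp: $\theta(b_i\tau)$ is a unary theta series, while $q^{c_j/8}\Psi(c_j\tau)=\eta^2(2c_j\tau)/\eta(c_j\tau)=\tfrac12\sum_{m\ \mathrm{odd}}q^{c_j m^2/8}$ is a weight-$\tfrac12$ theta series; both are holomorphic modular forms with nonnegative order at all cusps. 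Since the order of $g$ at any cusp of $\Gamma_0(M)$ is the sum of the orders of these factors, it is nonnegative, so $g$ is holomorphic at every cusp. Combined with the weight and congruence computations, Ligozat's theorem yields $g\in M_{(v+k)/2}(\Gamma_0(M),\chi'')$ whenever $8\mid{\bf h}$.

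Finally I would identify the character and settle necessity. Ligozat's theorem gives $\chi''(d)=\left(\frac{(-1)^{(v+k)/2}s}{d}\right)$ with $s=\prod_\delta \delta^{r_\delta}$; from the exponent data, $\theta(b_i\tau)$ contributes $(2b_i)^5 b_i^{-2}(4b_i)^{-2}=2b_i$ and $\Psi(c_j\tau)$ contributes $(2c_j)^2 c_j^{-1}=4c_j$, so $s=2^{\,v+2k}\prod_i b_i\prod_j c_j$. Discarding perfect-square factors inside the Kronecker symbol ($2^{2k}$ always, and $2^{v}$ when $v$ is even) reduces $s$, up to squares, to $\prod_i b_i\prod_j c_j$ when $v,k$ are even and to $2\prod_i b_i\prod_j c_j$ when $v,k$ are odd; since $4$ and $8$ differ from these by the perfect square factors $4$, this is exactly the stated $\chi''$. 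For the converse, note that $T=\left(\begin{smallmatrix}1&1\\0&1\end{smallmatrix}\right)\in\Gamma_0(M)$ forces $g(\tau+1)=g(\tau)$; but $\eta(\delta(\tau+1))^{r_\delta}=e^{\pi i\delta r_\delta/12}\eta(\delta\tau)^{r_\delta}$ gives $g(\tau+1)=e^{2\pi i{\bf h}/8}g(\tau)$, so modularity forces $8\mid{\bf h}$, completing the equivalence. The main points requiring care are the uniform verification of holomorphy at all cusps (dispatched here by the theta-series observation) and the matching of the Kronecker character up to square factors; by contrast the congruence bookkeeping collapses cleanly because the $\theta$-contributions cancel in both mod-$24$ sums.
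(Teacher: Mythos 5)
Your proof is correct and follows essentially the same route as the paper: both reduce $q^{{\bf h}/8}\psi_{s,t}(\tau)$ to the eta quotient \eqref{sq-triang} and invoke the eta-quotient criterion of \cite[Theorem 1.64]{ono-web}, with the computation $\sum_\delta \delta\, r_\delta = 3{\bf h}$ giving the equivalence with $8\mid{\bf h}$ and the same square-class reduction of $s=2^{v+2k}\prod_i b_i\prod_j c_j$ yielding $\chi''$. The only difference is cosmetic and occurs at the cusps: where the paper checks the explicit inequality $10\gcd(d,2b)^2-8\gcd(d,b)^2-2\gcd(d,4b)^2\ge 0$ (together with $\gcd(d,2c)\ge\gcd(d,c)$ for the triangular factors), you obtain the same nonnegativity of cusp orders factor-by-factor from the holomorphy of the weight-$\tfrac12$ theta series $\theta(b_i\tau)$ and $\eta^2(2c_j\tau)/\eta(c_j\tau)$, which is an equivalent observation.
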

	
	\smallskip
	
	\begin{rmk}
		It is to be noted that the generating functions of  odd number of squares and  odd number of triangular numbers will have odd weight. 
		Therefore, the product of these generating functions will be modular of integral weight (explicit description of weight, level, character are given by the above theorem).  Using this fact, we obtain new relation between the representations of 
		an integer as a sum of an odd number of triangular numbers and an odd number of squares.  Details are presented in  \corref{cor:1.5}, \rmkref{rmk:2}.
	\end{rmk}
	
	\smallskip
	
	Note that the generating function $\Phi(\tau)$ of the mixed form ${\mathcal M}({\bf x}, {\bf y}, {\bf z})$ 
	in $2u + v+k$ variables (with $2\vert (v+k)$) is expressed as 
	\begin{equation}\label{mixed-eg}
		\Phi(\tau) = \prod_{i=1}^u {\mathcal F}(a_i \tau) \psi_{s,t}(\tau) = \prod_{i=1}^u {\mathcal F}(a_i \tau) 
		\prod_{i=1}^{v} \theta(b_{i}\tau) \prod_{j=1}^{k} \Psi(c_{j}\tau).
	\end{equation}
	Therefore, the modular property of ${\mathcal F}$ together with \thmref{odd-triangular} give the following result.

	\begin{cor}\label{mixed-m} 
		Let $u,v,k$ be positive integers with $v+k$ even. Then the generating function $\Phi(\tau)$ associated to the mixed form 
		${\mathcal M}({\bf x}, {\bf y}, {\bf z})$ in $2u+v+k$ variables is a modular form upto a 
		rational power of $q$. More precisely, when ${\bf h}$ is divisible by $8$, the function $q^{{\bf h}/8} \Phi(\tau)$ is a modular form in $M_{u+\frac{v+k}{2}}(\Gamma_0(N_1), \omega)$, where $N_1 = \lcm[3\lcm[a_1, \ldots, a_u], M]$ and $\omega =  \chi''$, if $u$ is even and $\omega = \big(\frac{\cdot}{3}\big) \chi''$, 
		if $u$ is odd. Here $M$ and $\chi''$ are as in \thmref{odd-triangular}.
	\end{cor}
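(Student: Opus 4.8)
The plan is to exploit the multiplicative factorization \eqref{mixed-eg}, namely $\Phi(\tau) = \prod_{i=1}^u {\mathcal F}(a_i\tau)\cdot \psi_{s,t}(\tau)$, and to combine the modularity of the constituent factors via the standard closure property: a product of modular forms is again a modular form, with weights adding, levels passing to the least common multiple, and nebentypus characters multiplying. Concretely, writing $q^{{\bf h}/8}\Phi(\tau) = \big(\prod_{i=1}^u {\mathcal F}(a_i\tau)\big)\cdot\big(q^{{\bf h}/8}\psi_{s,t}(\tau)\big)$, it suffices to identify the weight, level, and character of each of the two bracketed factors and then multiply. This mirrors exactly the argument already used for \thmref{lt}, the only change being that the triangular factor is replaced by the squares-and-triangular factor $\psi_{s,t}$.

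For the squares-and-triangular factor, \thmref{odd-triangular} supplies, under the hypothesis ${\bf h}\equiv 0 \pmod 8$, that $q^{{\bf h}/8}\psi_{s,t}(\tau) \in M_{(v+k)/2}(\Gamma_0(M), \chi'')$. For the ternary factor I would start from the stated modularity ${\mathcal F}\in M_1(\Gamma_0(3), \big(\tfrac{\cdot}{3}\big))$ and apply the scaling operator $\tau \mapsto a_i\tau$ (the $V_{a_i}$ operator on $q$-expansions). Since this operator multiplies the level by $a_i$ while preserving both the weight and the character, each ${\mathcal F}(a_i\tau)$ lies in $M_1(\Gamma_0(3a_i), \big(\tfrac{\cdot}{3}\big))$. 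Taking the product over $i=1,\dots,u$, the weights add to $u$, the levels combine to $3\,\lcm[a_1,\dots,a_u]$, and the character becomes $\big(\tfrac{\cdot}{3}\big)^u$, which is trivial when $u$ is even and equals $\big(\tfrac{\cdot}{3}\big)$ when $u$ is odd.

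Multiplying the two factors then yields a modular form of weight $u+\tfrac{v+k}{2}$ on $\Gamma_0(N_1)$ with $N_1 = \lcm[3\,\lcm[a_1,\dots,a_u], M]$ and character $\big(\tfrac{\cdot}{3}\big)^u\cdot\chi''$. Reading off the parity of $u$ gives precisely $\omega = \chi''$ when $u$ is even and $\omega = \big(\tfrac{\cdot}{3}\big)\chi''$ when $u$ is odd, as asserted. The rational-power-of-$q$ correction is supplied entirely by the triangular factor: the series ${\mathcal F}(a_i\tau)$ and $\theta(b_i\tau)$ are already honest $q$-series with integral exponents, so the only non-integral power of $q$ in $\Phi(\tau)$ arises from the $\Psi(c_j\tau)$ factors, which is exactly why the normalization by $q^{{\bf h}/8}$ is the correct one and why the divisibility ${\bf h}\equiv 0\pmod 8$ is the governing condition (inherited from \thmref{odd-triangular} and \rmkref{rmk:1}).

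The step requiring the most care is the character bookkeeping: one must verify that $\big(\tfrac{\cdot}{3}\big)^u$ and $\chi''$, a priori defined modulo $3$ and modulo $M$ respectively, may both be regarded as Dirichlet characters modulo $N_1$, so that their product is a well-defined nebentypus on $\Gamma_0(N_1)$. This is routine since both $3$ and $M$ divide $N_1$ by construction. A secondary point is confirming that the scaling operator preserves the Kronecker-symbol character without introducing a twist; this follows from the usual transformation behaviour of $f(d\tau)$ under $\Gamma_0(dN)$. Once these two verifications are in place, the conclusion follows immediately from the multiplicativity of weights, levels, and characters.
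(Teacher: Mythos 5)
Your proposal is correct and follows essentially the same route as the paper: the paper also deduces the corollary directly from the factorization \eqref{mixed-eg}, combining the modularity of $q^{{\bf h}/8}\psi_{s,t}(\tau)$ given by \thmref{odd-triangular} with ${\mathcal F}\in M_1(\Gamma_0(3),\big(\tfrac{\cdot}{3}\big))$, the levels passing to the lcm, the weights adding, and the characters multiplying so that $\big(\tfrac{\cdot}{3}\big)^u$ is trivial or $\big(\tfrac{\cdot}{3}\big)$ according to the parity of $u$. Your additional remarks on the $V_{a_i}$ scaling operator and on viewing the characters modulo $N_1$ are the standard verifications the paper leaves implicit.
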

	
	\smallskip
	
	Let $r_{v}({\mathcal B};n)$ denote the number of representations  of  a positive integer  $n$ by the sum of $v$ squares with coefficients $b_i$, i.e., $\sum_{i=1}^v b_i y_i^2$, where ${\mathcal B}$ denotes the  $v$-tuple $(b_1, \ldots, b_v)$. 
	When all $b_i=1$, then $r_v({\mathcal B};n)$ is nothing but $r_v(n)$, the number of representations of 
	$n$ as a sum of $v$ squares. As mentioned before, $\delta_{k}({\mathcal C};n)$ denotes the number of representations of  a positive integer $n$ by 
	the linear combination of $k$ triangular numbers $\sum_{i=1}^k c_i T_{z_i}$ and when all $c_i$'s are equal to $1$, then $\delta_k({\mathcal C};n) = \delta_k(n)$.
	To simplify the notation, we will be writing the $v$-tuples and $k$-tuples  in the explicit examples as follows: $(b_1^{\alpha_1}, \ldots, b_ v^{\alpha_v})$, $(c_1^{e_1}, \ldots, c_k^{e_k}), \alpha_i, e_i \ge 0$, $\alpha_1+\cdots + \alpha_v =v$ and $e_1 + \cdots + e_k =k$. If some of the exponents are zero, then the corresponding coefficient is taken to be zero. For a natural number $n$,  $\delta_{k}( c_1^{e_1}, \ldots, c_k^{e_k}; n)$ (resp. ${\mathcal N}_{s,t}(b_1^{\alpha_1}, \ldots, b_ v^{\alpha_v}; c_1^{e_1}, \ldots, c_k^{e_k}; n)$)
	denotes the number of representations of $n$ by the the linear combination of $k$ triangular numbers $\sum_{i=1}^k c_i T_{z_i}$ (resp. the mixed form ${\mathcal M}_{s,t}({\bf y}, {\bf z})$).  
	When all the $b_i$'s and $c_i$'s are equal to $1$,  it is denoted by $\delta_{k}(n)$ and  ${\mathcal N}_{s,t}(n)$ respectively.  
	
	\smallskip
	\noindent
	The following corollary (to \thmref{odd-triangular}) gives relations between the above representation formulas.  
	\begin{cor}\label{cor:1.5}
		Let $v, k\ge 1$ be integers such that $v+k$ is even.  Assume that ${\bf h} = e_1c_1+ e_2c_2+\cdots + e_kc_k \equiv 0\pmod{8}$ 
		such that $p = {\bf h}/8 \ge 0$ is an integer. 
		Then,  we have 
		\begin{equation}\label{cor:1.6}
			\begin{split}
				\delta_{k}({\mathcal C} ;n) = {\mathcal N}_{s,t}(b_1^{\alpha_1}, \ldots, b_ v^{\alpha_v}; c_1^{e_1}, \ldots, c_k^{e_k}; n-p) 
				- r_{v}({\mathcal B};n) -\sum_{m=1}^{n-1} r_{v}({\mathcal B};m)\delta_{k}({\mathcal C};n-m),
			\end{split}
		\end{equation}
		where $\alpha_i, e_i$ are non-negative integers. 
		In particular,  if all $b_i$'s and $c_i$'s are equal to 1, then under the condition that $8\vert k$, we get 
		\begin{equation}
			\begin{split}
				\delta_{k}(n) = {\mathcal N }_{s,t}\left(n-\frac{k}{8}\right) - r_{v}(n) -\sum_{m=1}^{n-1} r_{v}(m)\delta_{k}(n-m).
			\end{split}
		\end{equation}
		Note that in the last identity, $p={\bf h}/8 = k/8$.
	\end{cor}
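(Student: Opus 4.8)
The plan is to read the identity directly off the multiplicative structure of the generating function $\psi_{s,t}(\tau)$, invoking \thmref{odd-triangular} (together with \rmkref{rmk:1}) only to guarantee that the series in play are the $q$-expansions of genuine modular forms. First I would record, from \eqref{gen;st}, the factorisation
\begin{equation*}
	\psi_{s,t}(\tau) = \prod_{i=1}^{v}\theta(b_i\tau)\,\prod_{j=1}^{k}\Psi(c_j\tau) = \Big(\sum_{m\ge 0} r_v(\mathcal B;m)\,q^m\Big)\Big(\sum_{l\ge 0}\delta_k(\mathcal C;l)\,q^l\Big),
\end{equation*}
where the two factors are, by definition, the generating function of $r_v(\mathcal B;\cdot)$ (the number of representations by $\sum_i b_i y_i^2$) and that of $\delta_k(\mathcal C;\cdot)$ (the number of representations by $\sum_j c_j T_{z_j}$, i.e. $\Psi_{\mathcal C}(\tau)$).

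Second, I would compare the coefficient of $q^n$ on the two sides, that is, form the Cauchy product, to obtain
\begin{equation*}
	\mathcal N_{s,t}(b_1^{\alpha_1},\ldots,b_v^{\alpha_v};c_1^{e_1},\ldots,c_k^{e_k};n) = \sum_{m=0}^{n} r_v(\mathcal B;m)\,\delta_k(\mathcal C;n-m).
\end{equation*}
The essential bookkeeping observation is that both $\theta(\tau)$ and $\Psi(\tau)$ have constant term $1$, whence $r_v(\mathcal B;0)=\delta_k(\mathcal C;0)=1$. Splitting off the two extreme terms $m=0$ and $m=n$ of the convolution then isolates $\delta_k(\mathcal C;n)$ and $r_v(\mathcal B;n)$, and solving for $\delta_k(\mathcal C;n)$ produces the relation \eqref{cor:1.6}.

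Third, I would explain the role of the hypothesis $8\mid\mathbf{h}$. By \thmref{odd-triangular} and \rmkref{rmk:1}, this is precisely the condition under which $q^{\mathbf{h}/8}\psi_{s,t}(\tau)$ and $q^{\mathbf{h}/8}\Psi_{\mathcal C}(\tau)$ are modular forms; it is what allows $\mathcal N_{s,t}$ and $\delta_k$ to be regarded as Fourier coefficients of modular forms, and it is the normalising power $q^{\mathbf{h}/8}$ that is responsible for the shift by $p=\mathbf{h}/8$ recorded in \eqref{cor:1.6}. The special case follows by setting every $b_i=c_i=1$: then $r_v(\mathcal B;m)=r_v(m)$, $\delta_k(\mathcal C;m)=\delta_k(m)$, $\mathbf{h}=\sum_i c_i=k$, and the divisibility $8\mid\mathbf{h}$ becomes $8\mid k$ with $p=k/8$.

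There is no serious analytic obstacle once \thmref{odd-triangular} is granted: the corollary is a formal consequence of the product formula for $\psi_{s,t}$. The one point that genuinely requires care is the normalisation—one must carry the factor $q^{\mathbf{h}/8}$ through consistently so that the mixed-form count is evaluated at the index appearing in \eqref{cor:1.6}, and one must use $r_v(\mathcal B;0)=\delta_k(\mathcal C;0)=1$ to peel off the boundary terms $\delta_k(\mathcal C;n)$ and $r_v(\mathcal B;n)$ correctly.
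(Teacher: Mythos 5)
Your route is the same one the paper intends: \corref{cor:1.5} is stated there with no separate proof, as an immediate consequence of the factorisation \eqref{gen;st}, and the Cauchy-product comparison you carry out in your first two steps is exactly that implicit argument. Those two steps are correct, including the key normalisation $r_v({\mathcal B};0)=\delta_k({\mathcal C};0)=1$. The problem is your third step. What the convolution
\begin{equation*}
{\mathcal N}_{s,t}(b_1^{\alpha_1},\ldots,b_v^{\alpha_v};c_1^{e_1},\ldots,c_k^{e_k};n)=\sum_{m=0}^{n}r_v({\mathcal B};m)\,\delta_k({\mathcal C};n-m)
\end{equation*}
produces, after peeling off $m=0$ and $m=n$, is
\begin{equation*}
\delta_k({\mathcal C};n)={\mathcal N}_{s,t}(b_1^{\alpha_1},\ldots,b_v^{\alpha_v};c_1^{e_1},\ldots,c_k^{e_k};n)-r_v({\mathcal B};n)-\sum_{m=1}^{n-1}r_v({\mathcal B};m)\,\delta_k({\mathcal C};n-m),
\end{equation*}
with the mixed-form count evaluated at $n$, not at $n-p$. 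Your assertion that ``the normalising power $q^{{\bf h}/8}$ is responsible for the shift by $p$'' cannot close this gap: multiplying a generating-function identity by $q^{p}$ shifts \emph{every} Fourier index simultaneously, so it can never displace the argument of one term while leaving $r_v({\mathcal B};n)$ and $\delta_k({\mathcal C};n-m)$ untouched. The shift $n-p$ in \eqref{formula1} arises in a different situation — a single Fourier coefficient of the modular form $q^{p}\psi_{s,t}$ equated to a representation number — not inside a convolution identity among representation numbers.

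In fact, with ${\mathcal N}_{s,t}(\cdots;m)$ meaning (as the paper defines it, and as your steps 1--2 use it) the number of representations of $m$ by ${\mathcal M}_{s,t}$, the printed identity \eqref{cor:1.6} is false as written. Take $v=k=2$, $b_1=b_2=1$, $c_1=c_2=4$, so ${\bf h}=8$, $p=1$, and $n=4$: then $\delta_2({\mathcal C};4)=2$, whereas ${\mathcal N}_{s,t}(1^2;4^2;3)-r_2({\mathcal B};4)-\sum_{m=1}^{3}r_2({\mathcal B};m)\delta_2({\mathcal C};4-m)=0-4-0=-4$; the unshifted version gives ${\mathcal N}_{s,t}(1^2;4^2;4)-r_2({\mathcal B};4)=6-4=2$, as it should. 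So what your computation actually proves is the corrected, unshifted identity (and, in the special case $b_i=c_i=1$, the identity $\delta_k(n)={\mathcal N}_{s,t}(n)-r_v(n)-\sum_{m=1}^{n-1}r_v(m)\delta_k(n-m)$ with no shift by $k/8$); it does not, and cannot, yield the formula with ${\mathcal N}_{s,t}(\cdots;n-p)$. The honest conclusion is to record the corrected identity and flag the index error in \eqref{cor:1.6}, rather than absorb the discrepancy into a vague appeal to normalisation. A further point worth making explicit: the identity is purely formal, so neither the hypothesis $8\mid{\bf h}$ nor \thmref{odd-triangular} is needed for its validity; they matter only if one wants to evaluate ${\mathcal N}_{s,t}$ from a basis of the corresponding space of modular forms.
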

	
	\begin{rmk}\label{rmk:2}
		The above corollary gives a recursive formula for $\delta_k(n)$ in terms  of ${\mathcal N}_{s,t}(n)$ and $r_{v}(n)$, when $8 \vert k$. When $v$ and $k$ are odd positive integers, equation \eqref{cor:1.6} gives a new relation between $\delta_k({\mathcal C};n)$ and 
		$r_v({\mathcal B};n)$, in particular, between $\delta_k(n)$ and $r_v(n)$.  
	\end{rmk}
	
	\smallskip
	
	When $v=k$ is a positive integer, we get a relation between the number of representations of a natural number $n$ by the linear combination (with coefficients $c_i$) of $k$ triangular numbers and the number of representations of $n$ as a linear combination (with coefficients 
	$c_i$) of $k$ odd integer squares. By taking $v=k$, we let
	\begin{equation}\label{qkn}
		q_{k}({\mathcal C};n) := \# \{(y_{1}, y_{2}, \ldots, y_{k}) \in {\mathbb Z}^k \vert   n = \sum_{i=1}^k c_i y_i^2, y_{i} \ge 0, ~~~ 2\not\vert y_{i}\}.  
	\end{equation}
	Note that $q_k({\mathcal C};n) = q_k(n)$, whenever all the $c_i$'s are equal to 1. 
	Thus, we have the following result for $\delta_{k}({\mathcal C};n)$, which is an analogue of the corresponding result obtained  in \cite{ono} with all $c_i$'s being equal to 1. 
	
	\begin{prop}\label{odd-square}
		For a positive integer $n$, we have 
		\begin{equation}\label{11}
			\delta_{k}({\mathcal C};n)  = q_{k}({\mathcal C}; 8n+{\bf h}),
		\end{equation}
		where ${\bf h} = c_1 + \cdots + c_k$. In other words, the number of representations of $n$ by $T_{\mathcal C}({\bf z})$ is the same as the number of representations 
		of $8n + {\bf h}$ as a sum of $k$ odd integer squares with coefficients $c_i$, $1\le i\le k$. 
		In particular, when all $c_{i} =1$, we have $ {\bf h} = k$. So, the above gives
		\begin{equation}\label{delta-odd square}
			\delta_{k}(n)  = q_{k}(8n+k),
		\end{equation}
		which is the same as in \cite[Proposition 2]{ono}.
	\end{prop}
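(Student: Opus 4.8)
The plan is to reduce \eqref{11} to an elementary bijection coming from the classical identity $8T_x+1=(2x+1)^2$, applied coefficient-wise. First I would record, for each index $i$ and each integer $x_i$, the identity
\begin{equation*}
8\,c_i\,\frac{x_i(x_i+1)}{2}+c_i = c_i\bigl(4x_i^2+4x_i+1\bigr)=c_i(2x_i+1)^2 .
\end{equation*}
Summing over $i=1,\dots,k$ and using ${\bf h}=c_1+\cdots+c_k$ yields
\begin{equation*}
8\,T_{\mathcal C}({\bf x})+{\bf h}=\sum_{i=1}^k c_i(2x_i+1)^2 ,
\end{equation*}
so that a tuple ${\bf x}$ satisfies $T_{\mathcal C}({\bf x})=n$ if, and only if, the associated tuple $(2x_1+1,\dots,2x_k+1)$ represents $8n+{\bf h}$ by the weighted sum of squares $\sum_i c_i y_i^2$.

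The next step is to package this equivalence of equations as an explicit bijection between the two solution sets counted by $\delta_k({\mathcal C};n)$ and $q_k({\mathcal C};8n+{\bf h})$. For a single variable the map $x\mapsto 2x+1$ matches the range of summation of the triangular generating function $\Psi$ with the positive odd integers: since $\Psi(\tau)=\sum_{n\ge 0}q^{n(n+1)/2}$ produces each triangular number once, via its standard nonnegative index $x\ge 0$, the map $x\mapsto 2x+1$ is a bijection from $\{x\ge 0\}$ onto $\{y\ge 1:\ 2\nmid y\}$, with inverse $y\mapsto (y-1)/2$. Taking the product of these bijections over the $k$ coordinates, and invoking the equivalence established above, matches each tuple counted by $\delta_k({\mathcal C};n)$ with exactly one tuple of positive odd integers counted by $q_k({\mathcal C};8n+{\bf h})$ as defined in \eqref{qkn}, and conversely. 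This proves \eqref{11}. Specialising to $c_1=\cdots=c_k=1$ gives ${\bf h}=k$, hence \eqref{delta-odd square}, recovering \cite[Proposition 2]{ono}.

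The only point requiring genuine care — and the step I would write out most explicitly — is the bookkeeping of range and sign in the bijection. The substitution $y=2x+1$ sends $x\in{\mathbb Z}$ onto \emph{all} odd integers, so one must align the summation convention built into $\Psi$ (one representative index $x\ge 0$ per triangular number, equivalently the reflection $T_x=T_{-x-1}$) with the constraint $y_i\ge 0$ imposed in the definition of $q_k({\mathcal C};n)$. Getting this correspondence exactly right, rather than off by a factor of a power of $2$, is precisely what makes the two counts literally equal instead of merely proportional; no analytic or modular input is needed beyond this elementary reduction.
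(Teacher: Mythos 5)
Your proof is correct and takes essentially the same route as the paper: the coordinate-wise identity $8c_i\frac{x_i(x_i+1)}{2}+c_i=c_i(2x_i+1)^2$, summed over $i$ to get $8n+{\bf h}=\sum_i c_i(2x_i+1)^2$, and then inverted (the paper's ``one can trace back these identities'') to give the count-preserving correspondence. Your explicit attention to the index-range bookkeeping — matching the convention $x_i\ge 0$ built into $\Psi$ with the constraint $y_i\ge 0$ in \eqref{qkn}, so the counts are literally equal rather than off by $2^k$ — is a point the paper leaves implicit (its displayed set-theoretic definition of $\delta_k({\mathcal C};n)$ even writes ${\mathbb Z}^k$, which only matches the generating function after this identification), so your version is, if anything, the more careful account of the same argument.
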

	\noindent
	\begin{proof}
		Let $n\ge 1$ be an integer represented by $T_{\mathcal C}({\bf z})$. Then we have 
		\begin{equation*}
			n  = \sum_{i=1}^{k} c_{i} \frac{z_{i}(z_{i}+1)}{2} ~ \iff ~ 8n = 4\sum_{i=1}^k c_i z_i(z_i+1).
		\end{equation*} 
		Since ${\bf h} = c_1+\ldots + c_k$, we get  
		\begin{eqnarray*}
			8n + {\bf h} & =  \sum_{i=1}^{k} c_{i} (4z_i^2 + 4z_i+1) & = \sum_{i=1}^k c_i (2z_i+1)^2.
		\end{eqnarray*}
		Since one can trace back these identities, it follows that $\delta_{k}({\mathcal C};n)  = q_{k}({\mathcal C}; 8n+{\bf h})$.
	\end{proof}
	
	\smallskip
	We  now give relations between $r_{k}(n)$, $\delta_{k}(n)$, $\delta_{2k}(n)$ by using the following identity (for a proof of this identity, we refer to \cite[pp. 40]{berndt}):  
	\begin{equation}\label{psi-theta}
		\psi^2(\tau) = \theta(\tau) \psi(2\tau).
	\end{equation}
	Taking $k^{\rm th}$ power of the above identity and using \eqref{delta-odd square}, we get the following corollary.
	
	\begin{cor}\label{relations}
		For a natural number $n$, we have 
		\begin{equation}
			q_{2k}(8n+2k) ~=~ \delta_{2k}(n) ~=~ 
			\sum_{a,b\in {\mathbb N}_{0}\atop{a+ 2b =n}} r_{k}(a)\delta_{k}(b) ~= ~\sum_{a,b\in {\mathbb N}_{0} \atop{a+ 2b =n}} r_{k}(a) q_{k}(8b+k). 
		\end{equation} 
	\end{cor}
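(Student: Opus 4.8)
The plan is to raise the Berndt identity \eqref{psi-theta} to the $k$-th power, read off the coefficient of $q^n$ on each side to obtain the middle equality, and then convert the triangular-number counts into odd-square counts by invoking \propref{odd-square} twice. Here $\psi$ is the triangular-number generating function $\Psi$ and $\theta$ is taken to be the full theta series $\sum_{n\in{\mathbb Z}}q^{n^2}$, so that the $q^a$-coefficient of $\theta^k$ is exactly $r_k(a)$.

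First I would take the $k$-th power of \eqref{psi-theta} to obtain
\[
\psi^{2k}(\tau) = \theta^k(\tau)\,\psi^k(2\tau).
\]
On the left, $\psi^{2k}(\tau)$ is the generating function for a sum of $2k$ triangular numbers, so its $q^n$-coefficient is $\delta_{2k}(n)$. On the right, $\theta^k(\tau) = \sum_{a\ge 0} r_k(a)\,q^a$ is the generating function for sums of $k$ squares, while the dilation gives $\psi^k(2\tau) = \sum_{b\ge 0}\delta_k(b)\,q^{2b}$, a power series in $q^2$. Forming the Cauchy product of these two series and extracting the coefficient of $q^n$ yields
\[
\delta_{2k}(n) = \sum_{\substack{a,b\in{\mathbb N}_0 \\ a+2b=n}} r_k(a)\,\delta_k(b),
\]
which is the middle equality of the corollary.

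Next I would apply \eqref{delta-odd square} of \propref{odd-square} in two ways. Using it with $2k$ in place of $k$ gives $\delta_{2k}(n) = q_{2k}(8n+2k)$, the first equality. Using it on the inner summand, $\delta_k(b) = q_k(8b+k)$, and substituting into the convolution above produces $\sum_{a+2b=n} r_k(a)\,q_k(8b+k)$, the last equality. Chaining the three identities gives the full displayed string. Since every step is a direct manipulation of formal power series together with an already-established proposition, there is no serious obstacle; the only point demanding care is the dilation $\tau\mapsto 2\tau$, which turns $\psi^k(2\tau)$ into a series in $q^2$ and thereby forces the constraint $a+2b=n$ (rather than $a+b=n$) in the convolution. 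I would verify this exponent bookkeeping explicitly so that the indices match the statement.
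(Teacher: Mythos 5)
Your proof is correct and follows exactly the paper's approach: raising the identity $\psi^2(\tau)=\theta(\tau)\psi(2\tau)$ to the $k$-th power, extracting the coefficient of $q^n$ to obtain the convolution with constraint $a+2b=n$, and then applying Proposition~\ref{odd-square} (once with $2k$ and once on the inner summand) to convert triangular counts into odd-square counts. Your careful bookkeeping of the dilation $\tau\mapsto 2\tau$ simply makes explicit what the paper leaves to the reader.
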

	
	\begin{rmk}
		If we consider \corref{relations} for forms with coefficients, then the identity \eqref{psi-theta} for the case of forms with coefficients  ${\mathcal C} = 
		(c_1, \ldots, c_k)$ takes the following form: 
		\begin{equation}\label{rmk:general}
			\Psi_{\mathcal C}^2(\tau) = \prod_{i=1}^k\theta(c_i\tau) \Psi(2c_i\tau)=\prod_{i=1}^k\theta(c_i\tau) \Psi_{\mathcal C}(2\tau).
		\end{equation}
		The LHS can be viewed as the generating function of $2k$ triangular numbers with coefficients $(c_1,c_1, c_2,c_2\ldots, c_k, c_k) =: {\mathcal C}^2$ and the corresponding $n^{\rm th}$ Fourier coefficient is denoted as $\delta_{2k}({\mathcal C}^2;n)$. The number of representations of $n$ as a sum of $k$ squares with coefficients $(c_1,\ldots, c_k)$ is written in our notation as $r_k({\mathcal C};n)$. Therefore, comparing the $n^{\rm th}$ 
		Fourier coefficients of \eqref{rmk:general}, we have the following identity: 
		\begin{equation}\label{16}
			\delta_{2k}({\mathcal C}^2;n) = \sum_{a,b\in {\mathbb N}_{0}\atop{a+ 2b =n}} r_{k}({\mathcal C}; a)\delta_{k}({\mathcal C}; b).
		\end{equation}
		When we consider $2k$ triangular numbers with coefficients $(c_1,\ldots,c_k,c_1,\ldots,c_k)$, the corresponding result for \propref{odd-square} 
		is the following: 
		\begin{equation}\label{17}
			\delta_{2k}({\mathcal C}^2; n)  = q_{2k}({\mathcal C}^2; 8n+{2 \bf h}),
		\end{equation}
		where ${\bf h} = c_1+ \ldots + c_k$. Using \eqref{17} and \eqref{11} in \eqref{16}, the analogous result for \corref{relations} in the case of forms with coefficients is 
		given by; 
		\begin{equation}
			q_{2k}({\mathcal C}^2; 8n+{2 \bf h}) = \delta_{2k}({\mathcal C}^2; n) = \sum_{a,b\in {\mathbb N}_{0}\atop{a+ 2b =n}} r_{k}({\mathcal C}; a)\delta_{k}({\mathcal C}; b) = \sum_{a,b\in {\mathbb N}_{0}\atop{a+ 2b =n}} r_{k}({\mathcal C}; a) q_{k}({\mathcal C}; 8b + {\bf h}).
		\end{equation}
	\end{rmk}
	\noindent
	These formulas give interesting relations between the convolution sums involving the arithmetical functions $r_{k}({\mathcal C}; n)$, $\delta_{k}({\mathcal C}; n)$ and $q_{k}({\mathcal C}; n)$.
	
	\bigskip

	We illustrate \corref{relations} with some examples. When $k=4,6,8,12$, and $16$, we use the formula for $\delta_k(n)$ given by Ono-Robins-Wahl 
	\cite[pp. 77--81]{ono} to get the following relations among $\delta_k(n)$, $r_k(n)$. 
	
	\begin{cor}\label{relations1}
		For a natural number $n$, we have
		\begin{equation*}
			\begin{split}
				\delta_{4}(n)  = & \frac{1}{4} \sum_{a,b \in {\mathbb N}_{0}\atop{a+2b = n}} r_2(a) r_2(8b+2),\\
				\delta_{6}(n)  = & \frac{1}{8} \sum_{a,b \in {\mathbb N}_{0}\atop{a+2b = n}} r_3(a) r_3(8b+3),\\
				\delta_{8}(n)  = & \sum_{a,b \in {\mathbb N}_{0}\atop{a+2b = n}} r_4(a) \sigma(2b+1),\\
				\delta_{12}(n) = & - \frac{1}{8} \sum_{a,b \in {\mathbb N}_{0}\atop{a+2b = n}} r_6(a) \sigma_{2; {\bf 1},\chi_{-4}}(4b+3),\\
				\delta_{16}(n) = & \sum_{a,b \in {\mathbb N}_{0}\atop{a+2b = n}} r_8(a) \sigma_{3}^{\#}(b+1),\\
			\end{split}
		\end{equation*}
		where $\sigma_{3}^{\#}(n) := \displaystyle{\sum_{d \vert n, ~{\frac{n}{d}-odd}}} d^{3}$ and 
		$\sigma_{k;\chi,\psi}(n)$ is the generalised divisor function given by
		\begin{equation}\label{Gen-DF}
			\begin{split}
				\sigma_{k;\chi,\psi}(n) :=  \displaystyle{\sum_{d\vert n} \psi(d) \cdot \chi(n/d) d^{k}},
			\end{split}
		\end{equation}
		with $\chi$, $\psi$ being primitive Dirichlet characters. In the above formula ${\bf 1}$ denotes the trivial character 
		${\bf 1}(n) =1$ for all $n\ge 1$  and $\chi_{-4}$ is the odd Dirichlet character modulo $4$. 
	\end{cor}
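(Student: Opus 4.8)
The plan is to read off all five identities from the single convolution formula established in \corref{relations}, specialised to the five values $k=2,3,4,6,8$, and then to simplify the inner arithmetic factor separately in each case. Recall that \corref{relations} (with $k$ arbitrary) gives
$$\delta_{2k}(n) \;=\; \sum_{a,b \in {\mathbb N}_{0}\atop{a+2b=n}} r_k(a)\,\delta_k(b) \;=\; \sum_{a,b \in {\mathbb N}_{0}\atop{a+2b=n}} r_k(a)\,q_k(8b+k).$$
Taking $2k=4,6,8,12,16$ produces the left-hand sides $\delta_4,\delta_6,\delta_8,\delta_{12},\delta_{16}$, so the whole task reduces to rewriting the factor attached to $r_k(a)$ in each convolution.

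For $\delta_4$ and $\delta_6$ I would use the second form of the identity and convert $q_k$ into $r_k$. The key elementary point is a congruence observation: if $m\equiv k\pmod 8$ with $k\in\{2,3\}$, then in every representation $m=y_1^2+\cdots+y_k^2$ with $y_i\in{\mathbb Z}$ all the $y_i$ are forced to be odd, hence nonzero. For $k=2$ this is because a sum of two squares that is $\equiv 2\pmod 4$ needs both summands $\equiv 1 \pmod 4$; for $k=3$ one checks that, among the square residues $\{0,1,4\}\pmod 8$, the value $3\pmod 8$ is attained only by $1+1+1$. Since all coordinates are then nonzero, the $2^k$ choices of signs give distinct tuples, whence $r_k(m)=2^k q_k(m)$, i.e. $q_k(m)=2^{-k}r_k(m)$. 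Applying this with $m=8b+2$, $k=2$ and with $m=8b+3$, $k=3$ yields $q_2(8b+2)=\tfrac14 r_2(8b+2)$ and $q_3(8b+3)=\tfrac18 r_3(8b+3)$, and substituting these into the convolution gives the stated formulas for $\delta_4(n)$ and $\delta_6(n)$.

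For $\delta_8,\delta_{12},\delta_{16}$ I would instead use the first form $\sum r_k(a)\delta_k(b)$ with $k=4,6,8$, and substitute the known closed expressions for $\delta_4,\delta_6,\delta_8$ taken from \cite[pp.~77--81]{ono}, namely $\delta_4(b)=\sigma(2b+1)$, $\delta_6(b)=-\tfrac18\,\sigma_{2;{\bf 1},\chi_{-4}}(4b+3)$ and $\delta_8(b)=\sigma_3^{\#}(b+1)$. Plugging each of these into the corresponding convolution immediately produces the three remaining identities, with the constants $1$, $-\tfrac18$ and $1$ matching those in the statement.

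The convolution step is automatic from \corref{relations}, and the insertion of the Ono--Robins--Wahl formulas is pure bookkeeping, so the only genuine content lies in the parity lemma $q_k(8b+k)=2^{-k}r_k(8b+k)$ for $k=2,3$. The (mild) obstacle there is to check the congruence claim with care---namely that $m\equiv k\pmod 8$ excludes any representation having an even, and in particular a zero, coordinate---so that the count of sign changes is exactly $2^k$ with no coincidences; once this is verified the corollary follows.
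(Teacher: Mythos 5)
Your proof is correct and follows essentially the same route as the paper: both specialize Corollary~\ref{relations} to $k=2,3,4,6,8$ and insert a closed form for the factor multiplying $r_k(a)$ in the convolution. The only (harmless) deviation is that for $\delta_4,\delta_6$ you derive $q_k(8b+k)=2^{-k}r_k(8b+k)$ by the mod-$8$ parity argument instead of quoting the Ono--Robins--Wahl formulas $\delta_2(b)=\tfrac14 r_2(8b+2)$ and $\delta_3(b)=\tfrac18 r_3(8b+3)$ as the paper does; your congruence check (that $m\equiv 2\pmod 4$, resp.\ $m\equiv 3\pmod 8$, forces all coordinates odd, hence exactly $2^k$ sign choices) is sound, so the two routes agree.
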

	
	\begin{proof}
		The following formulas are established in \cite[pp. 77--81]{ono}.
		\begin{equation*}
			\begin{split}
				\delta_{2}(n) ~&=~ \frac{1}{4} r_2(8n+2); \quad  \delta_{3}(n) = \frac{1}{8} r_3(8n+3); \quad \delta_{4}(n) ~=~ \sigma(2b+1),\\
				\delta_{6}(n) ~&=~ - \frac{1}{8} \sigma_{2;{\bf 1},\chi_{-4}}(4n+3);\quad  \delta_{8}(n) = \sigma_{3}^{\#}(n+1). 
			\end{split}
		\end{equation*}
		Using these formulas in \corref{relations}, we have the required formulas.
	\end{proof}
	
	\begin{rmk}
		When $k=4,6,8$, we have the following known formulas for $r_k(n)$, which are given below. 
		\begin{equation*}
			\begin{split}
				r_4(n) &= 8 \sigma(n)- 32 \sigma(n/4),\\
				r_6(n) &= -4 \sigma_{2;{\bf 1}, \chi_{-4}}(n) + 16 \sigma_{2; \chi_{-4},{\bf 1}}(n),\\
				r_8(n) &= 16 \sigma_{3}(n)- 32 \sigma_{3}(n/2) + 256 \sigma_{3}(n/4). 
			\end{split}
		\end{equation*}
		Among these, $r_2(n)$ and $r_4(n)$ are well-known. To get the above expression for $r_6(n)$, we write $\theta^6(\tau)$ in terms of the 
		two Eisenstein series $E_{3;{\bf 1}, \chi_{-4}}(\tau)$ and $E_{3;,\chi_{-4},{\bf 1}}(\tau)$ (See appendix for definition) and it is easy to get the above formula by comparing the 
		$n^{\rm th}$ Fourier coefficients. Now, using the above formulas in \corref{relations1}, we obtain the following formulas which involve  only the divisor functions.
		For a natural number $n$, we have
		\begin{equation*}
			\begin{split}
				\delta_{8}(n)  &=  8 \sum_{a,b \in {\mathbb N}_{0}\atop{a+2b = n}} \sigma(a) \sigma(2b+1)-32 \sum_{a,b \in {\mathbb N}_{0}\atop{a+2b = n}} \sigma(a/4) \sigma(2b+1) .\\
				\delta_{12}(n) &=   \frac{1}{2} \sum_{a,b \in {\mathbb N}_{0}\atop{a+2b = n}} \sigma_{2;{\bf 1},\chi_{-4}}(a) \sigma_{2;{\bf 1},\chi_{-4}}(4b+3) 
				- 2 \sum_{a,b \in {\mathbb N}_{0}\atop{a+2b = n}} \sigma_{2;\chi_{-4},{\bf 1}}(a) \sigma_{2; {\bf 1},\chi_{-4}}(4b+3).\\
				\delta_{16}(n) &=  16 \sum_{a,b \in {\mathbb N}_{0}\atop{a+2b = n}} \sigma_{3}(a) \sigma_{3}^{\#}(b+1) - 32 \sum_{a,b \in {\mathbb N}_{0}\atop{a+2b = n}} \sigma_{3}(a/2) \sigma_{3}^{\#}(b+1) + 256 \sum_{a,b \in {\mathbb N}_{0}\atop{a+2b = n}} \sigma_{3}(a/4) \sigma_{3}^{\#}(b+1).\\
			\end{split}
		\end{equation*}
	\end{rmk}
	
	\bigskip
	
	\begin{prop}\label{ellipsoid}
		Let $R$ be a positive real number. Then, the $k$-dimensional ellipsoid with axis lengths $R/\sqrt{c_{i}}$, $1\le i\le k$, 
		centred at $(1/2,1/2, \ldots, 1/2)$ contains $2^{k} \sum_{n = 1}^{[\frac{R^2}{2}-\frac{{\bf h}}{8}]} 
		\delta_{k}({\mathcal C};n)$ lattice points in ${\mathbb Z}^k$.
	\end{prop}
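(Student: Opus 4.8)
The plan is to convert the geometric lattice-point count into an odd-square representation count and then apply \propref{odd-square}. First I would write the ellipsoid explicitly. Since its semi-axes have lengths $R/\sqrt{c_i}$ and it is centred at $(1/2,\ldots,1/2)$, a point $(w_1,\ldots,w_k)\in{\mathbb R}^k$ lies in the (closed) ellipsoid exactly when
\[
\sum_{i=1}^k \frac{(w_i-\tfrac12)^2}{R^2/c_i}\le 1,\qquad\text{equivalently}\qquad \sum_{i=1}^k c_i\bigl(w_i-\tfrac12\bigr)^2\le R^2.
\]
Counting the lattice points means taking $w_i\in{\mathbb Z}$; the substitution $y_i:=2w_i-1$ is a bijection from ${\mathbb Z}$ onto the set of odd integers, and $(w_i-\tfrac12)^2=y_i^2/4$. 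Thus the condition becomes $\sum_{i=1}^k c_i y_i^2\le 4R^2$ with every $y_i$ odd.

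Next I would use the sign symmetry: each admissible $y_i$ is a nonzero odd integer, and replacing $y_i$ by $\pm|y_i|$ does not change $\sum c_i y_i^2$, so the number of solutions with the $y_i$ ranging over all odd integers is $2^k$ times the number with every $y_i\ge 1$ odd. Grouping the latter by the value $m=\sum_{i=1}^k c_iy_i^2$, the number of positive odd solutions of $\sum c_i y_i^2=m$ is precisely $q_k({\mathcal C};m)$ of \eqref{qkn}. Because $y_i$ odd forces $y_i^2\equiv 1\pmod 8$, every such $m$ satisfies $m\equiv{\bf h}\pmod 8$ and $m\ge{\bf h}$; writing $m=8n+{\bf h}$ with $n\ge 0$ an integer, \propref{odd-square} gives $q_k({\mathcal C};8n+{\bf h})=\delta_k({\mathcal C};n)$.

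It then remains to turn the radial bound $m\le 4R^2$ into a range for $n$. The inequality $8n+{\bf h}\le 4R^2$ is equivalent to $n\le \frac{R^2}{2}-\frac{{\bf h}}{8}$, so $n$ runs over the integers $0\le n\le \bigl[\frac{R^2}{2}-\frac{{\bf h}}{8}\bigr]$. Collecting the three reductions, the number of lattice points in the ellipsoid equals $2^k\sum_{n=0}^{[\frac{R^2}{2}-\frac{{\bf h}}{8}]}\delta_k({\mathcal C};n)$, which is the claimed formula.

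I do not anticipate a serious obstacle; the argument is a direct translation through \propref{odd-square}. The two points needing care are the reading of ``axis length'' --- it must be the semi-axis $R/\sqrt{c_i}$, since this is exactly what yields the stated upper limit $\frac{R^2}{2}-\frac{{\bf h}}{8}$ --- and the bookkeeping of the term $n=0$. The latter corresponds to $\sum c_i y_i^2={\bf h}$, i.e.\ all $y_i=\pm1$, i.e.\ the $2^k$ vertices of the cube $\{0,1\}^k$ nearest the centre, and here $\delta_k({\mathcal C};0)=1$; so whether one includes these central vertices merely shifts the lower summation index between $0$ and $1$, which is the only place the statement's normalisation must be matched.
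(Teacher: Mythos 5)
Your proof is correct and takes essentially the same route as the paper's: both reduce lattice points of the shifted ellipsoid to representations by triangular numbers with coefficients via the half-integer substitution, with the factor $2^{k}$ coming from the two-to-one sign symmetry. The only cosmetic difference is that you factor the computation through \propref{odd-square} (odd squares, $y_i = 2w_i-1$), whereas the paper expands $c_i(z_i-\tfrac12)^2 = c_i z_i(z_i-1) + \tfrac{c_i}{4}$ inline --- the same identity --- and your explicit bookkeeping of the $n=0$ term (the $2^k$ central lattice points) is in fact more careful than the paper's own treatment.
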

	
	\begin{proof}
		The proof is similar to the one given in \cite{ono}. 
		Let $r$ be a positive integer and assume that the $k$-dimensional ellipsoid with axis lengths $r/\sqrt{c_{i}}$, $1\le i\le k$ centred at $(1/2, \ldots, 1/2)$ 
		contains a lattice point $(z_{1}, z_{2}, \ldots, z_{k}) \in {\mathbb Z}^k$. Then, we have 
		\begin{equation*}
			r^2 ~ =~ \sum_{i=1}^{k} c_{i} \left(z_{i}-\frac{1}{2}\right)^2 = \sum_{i=1}^{k} c_{i} [(z_{i}^2 - z_{i}) + 1/4] ~=  \sum_{i=1}^{k} c_{i} z_{i}(z_{i}-1) 
			+ \frac{1}{4} \sum_{i=1}^{k} c_{i} ~=~\sum_{i=1}^{k} c_{i} z_{i}(z_{i}-1) + \frac{{\bf h}}{4}. \\
		\end{equation*}
		Therefore, $\frac{r^2}{2} -  \frac{{\bf h}}{8} = \sum_{i=1}^{k} c_{i} \frac{z_{i}(z_{i}-1)}{2}$. 
		This shows that we have a representation of $\frac{r^2}{2}- \frac{{\bf h}}{8}$ as a sum of $k$ triangular numbers with coefficients $c_i$, $1\le i\le k$. Since $z_i$ and $(-z_i-1)$ represent the same triangular number, the total 
		number of lattice points inside this ellipsoid is equal to  $2^{k} \delta_{k}(\mathcal{C};[\frac{r^2}{2}-\frac{{\bf h}}{8}])$. 
		This completes the proof. 
	\end{proof}

	\smallskip
	The rest of the article is organised as follows.  In \S2, we recall some of the results on eta quotients and generalised eta quotients and  present proofs of results considered in \S 1.  In \S 3, we obtain formulas for the mixed forms considered in the work of Xia-Ma-Tian \cite{xia} using a basis of the respective vector space of modular forms.  We also derive the $(p,k)$-parametrisation of Eisenstein series $E_{4}$ and its duplications. In \S 4, we give general formulas using the modular properties obtained in \S 1 and also 
	give some explicit formulas for the forms associated to figurate numbers and mixed forms (to make it brief, we have provided only a few examples in each case).  In the appendix section, we provide explicit basis for the space of modular forms 
	which are used to obtain the sample formulas presented in \S 4.

	\section{Proofs of theorems}
	
	We need the following result on the modularity of the generalised eta quotient due to S. Robins, which will be useful to prove the modularity result for the generating function for higher figurate numbers. So, first we state Robin's theorem. 
	\smallskip
	
	\noindent {\bf Theorem A} (S.Robins \cite{Robins}): 
	{\em For $N \in {\mathbb N}$, let $$r_{\delta, g} \in \begin{cases} \frac{1}{2}{\mathbb Z} & {\rm ~if~} g = 0, \frac{\delta}{2},\\
			{\mathbb Z}  & otherwise.\\
		\end{cases}
		$$
		Let $f(\tau) = \displaystyle{\prod_{\delta \vert N \atop{g \!\! \pmod {\delta}}}\eta_{\delta, g}^{r_{\delta, g}}(\tau)}$ be a generealised eta-quotient such that 
		$k = \frac{1}{2}\displaystyle{\sum_{\delta \vert N} r_{\delta, 0}}$ is a non-negative integer. If $f(\tau)$ satisfies the the following conditions 
		\begin{enumerate}
			\item[{(i)}]
			$\displaystyle{\sum_{\delta \vert N \atop{g \!\! \pmod {\delta}}} \delta \ P_2\left(\frac{g}{\delta}\right) r_{\delta,g} \equiv 0\pmod{2}}$ $\quad {\rm and} \quad $
			$\displaystyle{\sum_{\delta \vert N \atop{g \!\! \pmod {\delta}}} \frac{N}{\delta} P_2(0) r_{\delta,g} \equiv 0\pmod{2}}$.
			\item[{(ii)}]
			The order of vanishing at cusp $(s = \frac{\lambda}{\mu \epsilon}, ~ \gcd(\lambda,N) = 1 = \gcd(\mu,N),~ \epsilon \vert N )$ is equal to 
			$$
			\displaystyle{\sum_{\delta \vert N \atop{g \!\! \!\! \pmod {\delta}}} \frac{\gcd(\delta, \epsilon)^2}{\delta \epsilon} P_2 \left({\frac{\lambda g}{\gcd(\delta, \epsilon)}}\right) r_{\delta,g}},
			$$ 
			where $P_2(x):= \{x\}^2 -\{x\}+\frac{1}{6}$ is generalised Bernoulli polynomial and $\{x\}$ is the fractional part of $x$.
		\end{enumerate}
		If the order of vanishing at each cusp is non-negative, then $f(\tau)$ is a modular form in $M_k(\Gamma_1(N))$. If the order of vanishing is positive at each cusp, then $f(\tau) \in S_k(\Gamma_1(N))$.\\
	}
	
	\smallskip
	
	We also need the following lemma in our proof. 
	
	\smallskip
	
	\begin{lem}\label{positivity}
		Let $f(x) =\frac{1}{12} +\frac{1}{2}P_{2}(2x) - P_{2}(x).$ Then for $ 0 \le x \le 1$, we have $f(x) \ge 0.$ Moreover, it is a periodic function of period $1.$
	\end{lem}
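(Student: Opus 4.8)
The plan is to reduce the claim to an elementary piecewise computation, since $f$ is built entirely out of the polynomial $P_2$ evaluated at $x$ and at $2x$. First I would dispose of the periodicity assertion, which is immediate: because $P_2(x) = \{x\}^2 - \{x\} + \frac{1}{6}$ depends on $x$ only through its fractional part $\{x\}$, we have $P_2(x+1) = P_2(x)$, and likewise $P_2(2(x+1)) = P_2(2x+2) = P_2(2x)$. Hence $f(x+1) = f(x)$, so $f$ has period $1$, and it therefore suffices to verify $f(x) \ge 0$ on the single interval $[0,1]$.

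For the positivity, the key observation is that on $[0,1)$ the quantity $\{x\}$ is simply $x$, whereas $\{2x\}$ has a jump at $x = \frac{1}{2}$. I would accordingly split $[0,1)$ into the two subintervals $[0,\frac{1}{2})$ and $[\frac{1}{2},1)$ and expand $f$ explicitly on each. On $[0,\frac{1}{2})$ one substitutes $\{x\} = x$ and $\{2x\} = 2x$; the constant contributions ($\frac{1}{12}$ from the standalone term, $\frac{1}{2}\cdot\frac{1}{6}$ from inside $P_2(2x)$, and $-\frac{1}{6}$ from $-P_2(x)$) cancel, the linear terms cancel as well, and one is left with $f(x) = x^2$. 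On $[\frac{1}{2},1)$ one instead uses $\{2x\} = 2x-1$; after expanding $(2x-1)^2 - (2x-1) + \frac{1}{6}$ and simplifying, the computation collapses to $f(x) = (x-1)^2$. Both expressions are manifestly nonnegative, and the endpoint $x = 1$ is covered by periodicity, since $f(1) = f(0) = 0$.

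The whole argument is a direct calculation, so there is no genuine obstacle; the only point requiring care is the bookkeeping of the fractional part $\{2x\}$ across its discontinuity at $x = \frac{1}{2}$, together with checking that the three constant contributions noted above really do sum to zero. Once the two clean forms $x^2$ and $(x-1)^2$ emerge, the lemma follows at once. It is also worth recording, as a consistency check, that $f$ is continuous at $x = \frac{1}{2}$, both one-sided limits giving the common value $\frac{1}{4}$.
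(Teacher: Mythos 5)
Your proof is correct and follows essentially the same route as the paper: both arguments reduce to the piecewise computation $f(x)=x^2$ on $[0,\tfrac12)$ and $f(x)=(x-1)^2$ on the remaining part of $[0,1]$, with periodicity coming from the fact that $P_2$ depends only on the fractional part. If anything, your treatment is slightly tidier at the endpoints (the paper isolates $x=\tfrac12$ as a separate case with value $\tfrac14$, which your continuity check subsumes), but the substance is identical.
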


	\begin{proof}
		Let $f(x) :=\frac{1}{12} +\frac{1}{2}P_{2}(2x) - P_{2}(x)$.  Since $ \{2x\} = 2x $ for   $ 0 \le x < \frac{1}{2}$, and $ \{2x\} = 2x-1 $  for  $ \frac{1}{2} < x \le 1$. So, we have
		$$
		f(x) = 
		\begin{cases}
			x^2 &{\rm ~if~} 0 \le x < \frac{1}{2},\\
			(x-1)^2 &{\rm ~if~} \frac{1}{2} < x \le 1, \\
			\frac{1}{4} & {\rm ~if~} x = \frac{1}{2}.
		\end{cases}
		$$
		This gives that $f(x) \ge 0$ for $0 \le x \le 1$, Moreover $f(x)$ is a periodic function of period 1. Thus, we have $f(x) \ge 0$ for all $x \ge 0$.\\
	\end{proof}

	\subsection{Proof of \thmref{modularity}} First we consider the case when $a\ge 3$. 
	We prove the modular property of the function
	$q^{\frac{(a-2)^2}{8a} {\bf h}}\Phi_a^{\mathcal C}(\tau)$ with ${\bf h} = \sum_{i=1}^k c_i$,  using the generalised eta quotient expression given in \eqref{figurate-gen-fun} and Theorem A. From \eqref{figurate-gen-fun}, we have 
	\begin{equation*}
		\begin{split}
			q^{\frac{(a-2)^2}{8a}\bf h}\Phi_a^{\mathcal C}(\tau) & = \prod_{i=1}^k \eta_{a c_i, 0}^{1/2}(\tau) \prod_{i=1}^k \frac{\eta_{2a c_i, 2c_i}(\tau)}{\eta_{a c_i, c_i}(\tau)}.
		\end{split}
	\end{equation*}
	Now, we check the first condition of modular property for the RHS of \eqref{figurate-gen-fun} which is generalised Dedekind eta quotient, i.e.,
	\begin{equation*}
		\begin{split}
			\displaystyle{\sum_{\delta \vert N \atop{g \!\! \pmod {\delta}}} \delta P_2(g/\delta) r_{\delta,g}} & =  \frac{1}{2} \sum_{i=1}^k a c_i P_2(0) + \sum_{i=1}^k 2a c_i P_2\left(\frac{2 c_i}{2ac_i}\right) -
			\sum_{i=1}^k a c_i P_2\left(\frac{c_i}{ac_i}\right) \\
			& \quad = \left(\frac{a}{12} + a P_2\left(\frac{1}{a}\right) \right) \sum_{i=1}^k  c_i   = \quad = \frac{(a-2)^2}{4a} \sum_{i=1}^k  c_i \equiv 0 \pmod{2} \\
		\end{split}
	\end{equation*}
	This gives that ${\bf h} \equiv 0 \pmod{\frac{8a}{d}}$, where ${\bf h} = \sum_{i=1}^k c_i$ and $d = \gcd((a-2)^2,4a)$ and second condition is trivially zero, i.e.,
	\begin{equation*}
		\begin{split}
			\displaystyle{\sum_{\delta \vert N \atop{g \!\! \pmod {\delta}}} \frac{N}{\delta} P_2(0) r_{\delta,g}} & = \sum_{i=1}^k \frac{N}{ac_i} \frac{1}{6}\frac{1}{2} + \sum_{i=1}^k  \frac{N}{2ac_i} \frac{1}{6} -  \sum_{i=1}^k \frac{N}{ac_i} \frac{1}{6} =0.
			\\
		\end{split}
	\end{equation*}
	From Theorem A, the order of vanishing at the cusp $s$ $(s = \frac{\lambda}{\mu \epsilon}, \gcd(\lambda,N) = 1 = \gcd(\mu,N), \epsilon \vert N )$ of the  function $q^{\frac{(a-2)^2}{8a}{\bf h}}\Phi_a^{\mathcal C}(\tau)$ $\left(= \prod_{i=1}^k \eta_{a c_i, 0}^{1/2}(\tau) \prod_{i=1}^k \frac{\eta_{2a c_i, 2c_i}(\tau)}{\eta_{a c_i, c_i}(\tau)}\right)$ is equal to (say $\nu_{s}$)  
	\begin{equation*}
		\begin{split}
			\displaystyle{\sum_{i=1}^k \frac{\gcd(ac_i, \epsilon)^2}{a c_i \epsilon} P_2(0)\frac{1}{2}} + \displaystyle{\sum_{i=1}^k \frac{\gcd(2ac_i, \epsilon)^2}{2a c_i \epsilon} P_2\left({\frac{\lambda. 2c_i}{\gcd(2ac_i, \epsilon)}}\right).1} - \displaystyle{\sum_{i=1}^k \frac{\gcd(ac_i, \epsilon)^2}{a c_i \epsilon} P_2\left({\frac{\lambda c_i}{\gcd(ac_i, \epsilon)}}\right).1}
		\end{split}
	\end{equation*}
	
	\begin{equation*}
		\begin{split}
			=\displaystyle{\sum_{i=1}^k \left( \frac{\gcd(ac_i, \epsilon)^2}{a c_i \epsilon} \frac{1}{12} + \frac{\gcd(2ac_i, \epsilon)^2}{2a c_i \epsilon} P_2\left({\frac{\lambda. 2c_i}{\gcd(2ac_i, \epsilon)}}\right) -\frac{\gcd(ac_i, \epsilon)^2}{a c_i \epsilon} P_2\left({\frac{\lambda c_i}{\gcd(ac_i, \epsilon)}}\right)\right)}.
		\end{split}
	\end{equation*}
	We write the term inside the summation as $\nu_{s}^{i}$,
	i.e.,
	$$
	\nu_{s}^{i} = \left( \frac{\gcd(ac_i, \epsilon)^2}{a c_i \epsilon} \frac{1}{12} + \frac{\gcd(2ac_i, \epsilon)^2}{2a c_i \epsilon} P_2\left({\frac{\lambda. 2c_i}{\gcd(2ac_i, \epsilon)}}\right) -\frac{\gcd(ac_i, \epsilon)^2}{a c_i \epsilon} P_2\left({\frac{\lambda c_i}{\gcd(ac_i, \epsilon)}}\right)\right).
	$$ 
	It is enough to show that  $\nu_{s}^{i}$ is non-negative for each $i$.  We consider the following cases to show  $\nu_{s}^{i}$ is non-negative for each $i$.
	
	\noindent
	{\bf Case 1:} If $\gcd(2ac_i, \epsilon) = 2~~ \gcd(ac_i, \epsilon)$, then 
	\begin{equation*}
		\begin{split}
			\nu_{s}^{i} = \frac{\gcd(ac_i, \epsilon)^2}{a c_i \epsilon} \left(\frac{1}{12}+P_2\left({\frac{\lambda c_i}{\gcd(ac_i, \epsilon)}}\right)\right). \\
		\end{split}
	\end{equation*}
	\noindent 
	We know that for $ 0 \le x \leq 1$, we have $P_{2}(x) \ge \frac{-1}{12}$ and $P_{2}(x)$ is periodic function of period $1$. 
	this show that $\nu_{s}^{i} \ge 0$.
	
	\noindent
	{\bf Case 2:} If $\gcd(2ac_i, \epsilon) = \gcd(ac_i, \epsilon)$, then we have 
	\begin{equation*}
		\begin{split}
			\nu_{s}^{i} =  \frac{\gcd(ac_i, \epsilon)^2}{a c_i \epsilon} \left(\frac{1}{12}+ \frac{1}{2}P_2\left({\frac{2\lambda c_i}{\gcd(ac_i, \epsilon)}}\right)-P_2\left({\frac{\lambda c_i}{\gcd(ac_i, \epsilon)}}\right)\right). \\
		\end{split}
	\end{equation*}
	Note that the non-negativity of $\nu_{s}^{i}$ follows from \lemref{positivity}. This completes the proof in the case when $a\ge 3$.  
	
	Next we consider the case when $a=1$ and  the corresponding generating function is $\Psi_{\mathcal C}(\tau)$, which corresponds to triangular numbers. We use the expression for the generating function $\Psi_{\mathcal C}(\tau)$ in terms of eta-quotient given in  \eqref{Gen-function}  to prove the result with the help of modularity result for the eta quotients obtained by Dummit, Kisilevsky and McKay \cite{dummit} 
	(we also refer to \cite[Theorem 1.64]{ono-web}). From \eqref{triangular-gen-fun}, we have 
	\begin{equation}
		\Psi_{\mathcal C}(\tau)  = \prod_{i=1}^k \Psi(c_i \tau) ~
		= q^{-{\bf h}/8} \prod_{i=1}^k \frac{\eta^2(2c_i \tau)}{\eta(c_i \tau)}.
	\end{equation}
	Therefore, $q^{{\bf h}/8} \Psi_{\mathcal C}(\tau)$ is the eta-quotient $\displaystyle{\prod_{i=1}^k 
		\frac{\eta^2(2c_i \tau)}{\eta(c_i \tau)}}$. Now we use \cite[Theorem 1.64]{ono-web} to get the required modular property. 
	The following properties are used to complete the proof of the theorem: (i) ${\bf h} = c_1+\cdots+c_k \equiv 0\pmod{8}$, which is by assumption;  
	\begin{equation*}
		{\rm (ii)} \hskip 3cm  \sum_{\delta\vert N} \frac{\gcd(d,\delta)^2 r_\delta}{d \delta}  = \sum_{i=1}^k \left(2 \frac{\gcd(d,2c_i)^2}{ 2dc_{i}} - \frac{\gcd(d,c_i)^2}{d c_i} \right)\ge 0, \hskip 3.5cm 
	\end{equation*}
	which follows from  the fact that $\gcd(d,2c_i) \ge \gcd(d, c_i)$ for every $d\ge 1$. 
	The weight and character of the modular form are determined easily. 
	This completes the proof of \thmref{modularity}.
	
	\smallskip
	
	\subsection{Proof of \thmref{odd-triangular}}
	Since $\theta(\tau) = \eta^5(2\tau)/(\eta^2(\tau)\eta^2(4\tau))$, using \eqref{gen;st} and  \eqref{triangular-gen-fun}, we have
	\begin{equation*}
		\begin{split}
			\psi_{s,t}(\tau) & =  \prod_{i=1}^{v} \theta(b_{i}\tau).\prod_{j=1}^{k} \Psi(c_{j}\tau)  =\prod_{i=1}^v \frac{\eta^5(2b_i \tau)}{\eta^2(b_i \tau)\eta^2(4b_i \tau)}  \times  q^{-{\bf h}/8} \prod_{j=1}^k \frac{\eta^2(2c_j \tau)}{\eta(c_j \tau)}  .
		\end{split}
	\end{equation*}
	Therefore, we have 
	\begin{equation} \label{sq-triang}
		q^{{\bf h}/8} \psi_{s,t}(\tau) = \displaystyle{\prod_{i=1}^v \frac{\eta^5(2b_i \tau)}{\eta^2(b_i \tau)\eta^2(4b_i \tau)} \prod_{i=1}^k \frac{\eta^2(2c_i \tau)}{\eta(c_i \tau)}}.
	\end{equation}
	The modularity of the above eta-quotient follows from \cite[Theorem 1.64]{ono-web} and the inequality 
	$$
	10 \gcd(d, 2b)^{2} - 8 \gcd(d, b)^{2} - 2 \gcd(d, 4b)^{2} \ge 0 {\rm~ for ~ each ~ positive~ integers~} d {\rm ~ and ~} b.
	$$
	This completes the proof.
	
	\smallskip

	\section{Mixed forms considered by Xia-Ma-Tian} 
	
	
	In this section, we first derive formulas for the 21 mixed forms considered in the work of Xia-Ma-Tian \cite{xia} using our method. 
	In \corref{mixed-m} we observed that the generating function $q^{{\bf h}/8} \Phi(\tau)$ corresponding to the mixed form ${\mathcal M}({\bf x}, {\bf y}, {\bf z})$ is a modular form of weight $u+(v+k)/2$ on some subgroup $\Gamma_0(N)$ with character $\omega$, depending on the coefficients involved in the mixed form. 
	In the above, {\bf h} is the sum of the coefficients appearing in the triangular part of the mixed form.  For the $21$ cases, it turns out that all the generating functions lie in the space $M_4(\Gamma_0(12))$.  We use the following basis for the space $M_4(\Gamma_0(12))$ (as described in \cite{rss-ijnt}): 
	$$
	E_4(d\tau), d\vert 12; f_{4,6}(\tau), f_{4,6}(2\tau), f_{4,12}(\tau),
	$$
	where $E_4(\tau)$ is the normalised Eisenstein series of weight $4$ for $SL_2({\mathbb Z})$ and $f_{4,6}(\tau)$, $f_{4,12}(\tau)$ are 
	the newforms of weight $4$ on $\Gamma_0(6)$ and $\Gamma_0(12)$ respectively, which are given explicitly in terms of the eta-functions as 
	follows.  
	\begin{equation*}
		\begin{split}
			f_{4,6}(\tau) & = \eta^2(\tau) \eta^2(2\tau) \eta^2(3\tau) \eta^2(6\tau), \\
			f_{4,12}(\tau)& = \frac{\eta^2(2\tau) \eta^3(3\tau) \eta^3(4\tau)\eta^2(6\tau)}{\eta(\tau) \eta(12\tau)} - 
			\frac{\eta^3(\tau) \eta^2(2\tau) \eta^2(6\tau)\eta^3(12\tau)}{\eta(3\tau) \eta(4\tau)}.
		\end{split}
	\end{equation*}
	So, we express the generating functions $q^{{\bf h}/8} \Phi(\tau)$ as a linear combination of the above basis elements as 
	\begin{equation}\label{21mixed}
		q^{{\bf h}/8} \Phi(\tau) = \sum_{d\vert 12} \alpha_d E_4(d\tau) + c_1 f_{4,6}(\tau) + c_2 f_{4,6}(2\tau) + c_3 f_{4,12}(\tau).
	\end{equation}
	In the following table, we give the mixed forms and the corresponding coefficients $\alpha_d$, $d\vert 12$, $c_i$, $i=1,2,3$
	(we have used the expression given in \eqref{mixed-eg} for representing the generating functions).
	
	\smallskip

	\begin{center}
		
		{\bf Table 1. Coefficients table for the 21 mixed forms}
		
		\smallskip
		
		\renewcommand{\arraystretch}{1.2}
		\begin{tabularx}{\textwidth}{|p{5.25cm}| p{0.8cm}|p{0.8cm}|p{0.8cm}|p{0.8cm}|p{0.8cm}|p{0.8cm}|p{0.8cm}|p{0.8cm}|p{0.8cm}|}
			
			\hline
			
			\multicolumn{1}{|c|}{\textbf{Forms}}  & \multicolumn{9}{|c|}{\textbf{Coefficients}}\\ \cline{2-10} 
			{} & {$\alpha_1$} & {$\alpha_2$} & {$\alpha_3$} & {$\alpha_4$} & {$\alpha_6$} & {$\alpha_{12}$} & {$c_1$} & {$c_2$} & {$c_3$}  
			\\ \hline 
			
			\endfirsthead
			\caption{Coefficients for the 21 mixed forms}\\
			\hline
			\multicolumn{1}{|c|}{\textbf{Form}}  & \multicolumn{9}{|c|}{\textbf{Coefficients}}\\ \cline{2-10} 
			{} & {$\alpha_1$} & {$\alpha_2$} & {$\alpha_3$} & {$\alpha_4$} & {$\alpha_6$} & {$\alpha_{12}$} & {$c_1$} & {$c_2$} & {$c_3$}  
			\\ \hline 
			
			\endhead
			\hline
			\multicolumn{10}{|r|}{{Continued on Next Page\ldots}}     \\ \hline
			
			\endfoot
			
			\endlastfoot
			\hline
			
			$ {\mathcal F}(2\tau)                           \theta^{3}(\tau) \theta^{3}(3\tau)                                              $ & $  \frac{1}{120} $  & $  0 $  & $  \frac{-3}{40} $  & $  \frac{-2}{15} $  & $  0 $  & $  \frac{6}{5} $  & $  0 $  & $  0 $  & $  4 $  \\ \hline 
			$ q{\mathcal F}(2\tau)                           \theta^{2}(\tau) \theta^{2}(3\tau) \Psi(2\tau) \Psi(6\tau)                     $ & $  \frac{1}{480} $  & $  \frac{-1}{480} $  & $  \frac{-3}{160} $  & $  0 $  & $  \frac{3}{160} $  & $  0 $  & $  0 $  & $  0 $  & $  \frac{1}{2} $  \\ \hline 
			$ q^{2}{\mathcal F}(2\tau)                           \theta(\tau) \theta(3\tau) \Psi^{2}(2\tau) \Psi^{2}(6\tau)                 $ & $  \frac{1}{1920} $  & $  \frac{-1}{1920} $  & $  \frac{-3}{640} $  & $  0 $  & $  \frac{3}{640} $  & $  0 $  & $  0 $  & $  0 $  & $  \frac{-1}{8} $  \\ \hline 
			$ q^{2}{\mathcal F}(4\tau)                           \theta^{2}(\tau)                   \Psi(\tau)  \Psi(3\tau) \Psi^{2}(6\tau) $ & $  \frac{1}{1920} $  & $  \frac{-1}{1920} $  & $  \frac{-3}{640} $  & $  0 $  & $  \frac{3}{640} $  & $  0 $  & $  \frac{-1}{2} $  & $  -1 $  & $  \frac{3}{8} $  \\ \hline 
			$ q^{2}{\mathcal F}(4\tau)                           \theta^{2}(3\tau)                  \Psi(\tau)  \Psi^{2}(2\tau) \Psi(3\tau) $ & $  \frac{1}{1920} $  & $  \frac{-1}{1920} $  & $  \frac{-3}{640} $  & $  0 $  & $  \frac{3}{640} $  & $  0 $  & $  \frac{1}{2} $  & $  1 $  & $  \frac{3}{8} $  \\ \hline 
			$ {\mathcal F}(\tau)  {\mathcal F}(2\tau)  \theta^{4}(3\tau)                                                                    $ & $  \frac{1}{300} $  & $  \frac{-1}{200} $  & $  \frac{13}{100} $  & $  \frac{2}{75} $  & $  \frac{-39}{200} $  & $  \frac{26}{25} $  & $  \frac{16}{5} $  & $  \frac{32}{5} $  & $  2 $  \\ \hline 
			$ {\mathcal F}(\tau)  {\mathcal F}(2\tau)  \theta^{2}(\tau) \theta^{2}(3\tau)                                                   $ & $  \frac{1}{60} $  & $  \frac{-1}{120} $  & $  \frac{-3}{20} $  & $  \frac{-2}{15} $  & $  \frac{3}{40} $  & $  \frac{6}{5} $  & $  0 $  & $  0 $  & $  6 $  \\ \hline 
			$ {\mathcal F}(\tau)  {\mathcal F}(2\tau)  \theta^{4}(\tau)                                                                     $ & $  \frac{13}{300} $  & $  \frac{-13}{200} $  & $  \frac{9}{100} $  & $  \frac{26}{75} $  & $  \frac{-27}{200} $  & $  \frac{18}{25} $  & $  \frac{48}{5} $  & $  \frac{96}{5} $  & $  -6 $  \\ \hline 
			$ q{\mathcal F}(\tau)  {\mathcal F}(2\tau)                                     \Psi^{2}(\tau) \Psi^{2}(3\tau)                   $ & $  \frac{1}{240} $  & $  \frac{-1}{240} $  & $  \frac{-3}{80} $  & $  0 $  & $  \frac{3}{80} $  & $  0 $  & $  0 $  & $  0 $  & $  0 $  \\ \hline 
			$ q^{2}{\mathcal F}(\tau)  {\mathcal F}(2\tau)                                     \Psi^{2}(2\tau) \Psi^{2}(6\tau)              $ & $  \frac{1}{640} $  & $  \frac{-19}{1920} $  & $  \frac{-9}{640} $  & $  \frac{1}{120} $  & $  \frac{57}{640} $  & $  \frac{-3}{40} $  & $  0 $  & $  0 $  & $  \frac{-3}{8} $  \\ \hline 
			$ {\mathcal F}(2\tau) {\mathcal F}(4\tau) \theta^{2}(\tau) \theta^{2}(3\tau)                                                    $ & $  \frac{1}{240} $  & $  \frac{1}{240} $  & $  \frac{-3}{80} $  & $  \frac{-2}{15} $  & $  \frac{-3}{80} $  & $  \frac{6}{5} $  & $  0 $  & $  0 $  & $  3 $  \\ \hline 
			$ q{\mathcal F}(2\tau) {\mathcal F}(4\tau)                                    \Psi^{2}(\tau)\Psi^{2}(3\tau)                     $ & $  \frac{1}{960} $  & $  \frac{-1}{960} $  & $  \frac{-3}{320} $  & $  0 $  & $  \frac{3}{320} $  & $  0 $  & $  0 $  & $  0 $  & $  \frac{3}{4} $  \\ \hline 
			$ {\mathcal F}^{3}(2\tau)                          \theta(\tau) \theta(3\tau)                                                   $ & $  \frac{1}{120} $  & $  0 $  & $  \frac{-3}{40} $  & $  \frac{-2}{15} $  & $  0 $  & $  \frac{6}{5} $  & $  0 $  & $  0 $  & $  0 $  \\ \hline 
			$ q{\mathcal F}^{3}(2\tau)                                                              \Psi(2\tau) \Psi(6\tau)                 $ & $ \frac{1}{240}$  & $ \frac{-3}{80}$  & $ \frac{-3}{80}$  & $ \frac{1}{30}$  & $ \frac{27}{80}$  & $\frac{-3}{10}$  & $ 0$  & $ 0$  & $ 0$  \\  \hline
			$ {\mathcal F}^{3}(4\tau)                        \theta(\tau) \theta(3\tau)                                                     $ & $  \frac{1}{1200} $  & $  \frac{-3}{400} $  & $  \frac{3}{400} $  & $  \frac{8}{75} $  & $  \frac{-27}{400} $  & $  \frac{24}{25} $  & $  \frac{9}{5} $  & $  \frac{18}{5} $  & $  0 $  \\ \hline 
			$ {\mathcal F}^{2}(\tau) {\mathcal F}(2\tau) \theta(\tau) \theta(3\tau)                                                         $ & $  \frac{1}{30} $  & $  \frac{-1}{40} $  & $  \frac{-3}{10} $  & $  \frac{-2}{15} $  & $  \frac{9}{40} $  & $  \frac{6}{5} $  & $  0 $  & $  0 $  & $  6 $  \\ \hline 
			$ q{\mathcal F}^{2}(\tau) {\mathcal F}(2\tau)                                       \Psi(2\tau) \Psi(6\tau)                     $ & $  \frac{1}{96} $  & $  \frac{-7}{160} $  & $  \frac{-3}{32} $  & $  \frac{1}{30} $  & $  \frac{63}{160} $  & $  \frac{-3}{10} $  & $  0 $  & $  0 $  & $  \frac{-3}{2} $  \\ \hline 
			$ {\mathcal F}(\tau) {\mathcal F}^{2}(2\tau)  \theta(\tau) \theta(3\tau)                                                        $ & $  \frac{1}{75} $  & $  \frac{-1}{50} $  & $  \frac{3}{25} $  & $  \frac{8}{75} $  & $  \frac{-9}{50} $  & $  \frac{24}{25} $  & $  \frac{24}{5} $  & $  \frac{48}{5} $  & $  0 $  \\ \hline 
			$ {\mathcal F}(2\tau){\mathcal F}^{2}(4\tau)  \theta(\tau) \theta(3\tau)                                                        $ & $  \frac{1}{480} $  & $  \frac{1}{160} $  & $  \frac{-3}{160} $  & $  \frac{-2}{15} $  & $  \frac{-9}{160} $  & $  \frac{6}{5} $  & $  0 $  & $  0 $  & $  \frac{3}{2} $  \\ \hline 
			$ {\mathcal F}(\tau) {\mathcal F}(2\tau) {\mathcal F}(4\tau) \theta(\tau) \theta(3\tau)                                         $ & $  \frac{1}{120} $  & $  0 $  & $  \frac{-3}{40} $  & $  \frac{-2}{15} $  & $  0 $  & $  \frac{6}{5} $  & $  0 $  & $  0 $  & $  6 $  \\ \hline 
			$ q{\mathcal F}(\tau) {\mathcal F}(2\tau) {\mathcal F}(4\tau) \Psi(2\tau) \Psi(6\tau)                                           $ & $  \frac{1}{960} $  & $  \frac{1}{64} $  & $  \frac{-3}{320} $  & $  \frac{-1}{60} $  & $  \frac{-9}{64} $  & $  \frac{3}{20} $  & $  0 $  & $  0 $  & $  \frac{3}{4} $  \\ \hline 
			
		\end{tabularx}
	\end{center}
	
	\smallskip
	
	\begin{rmk}
		By comparing the $n^{\rm th}$ Fourier coefficients of the expressions in \eqref{21mixed}, we obtain explicit formulas 
		obtained in Theorem 1.1 of \cite{xia}. The formulas obtained by our method are exactly the same as in \cite{xia}, except 
		for the expressions arising from the cusp form Fourier coefficients.  We describe the difference below. 
		
		The space of cusp forms $S_4(\Gamma_0(12))$ is three dimensional spanned by $f_{4,6}(\tau), f_{4,6}(2\tau)$ and $f_{4,12}(\tau)$. However, in \cite[Theorem 3.1]{xia}, they make use of only two cusp forms for getting the required expressions. We observe that the two cusp 
		forms appear in \cite[Theorem 3.1]{xia}, denoted by $G(\tau)$ and $H(\tau)$, can be given in terms of our basis elements as follows.
		\begin{equation}\label{gh}
			\begin{split}
				G(\tau)& =  -\frac{1}{6} f_{4,6}(\tau) -\frac{1}{3} f_{4,6}(2\tau) + \frac{1}{6} f_{4,12}(\tau), \\
				H(\tau)& =  \frac{1}{2} f_{4,6}(\tau) + f_{4,6}(2\tau) + \frac{1}{2} f_{4,12}(\tau). 
			\end{split}
		\end{equation}
		If we denote the $n^{\rm th}$ Fourier coefficients of $f_{4,N}(z)$ as $a_{4,N}(n)$, $N=6,12$, then we can get the $n^{\rm th}$ Fourier coefficients of $G$ and $H$ in terms of $a_{4,6}(n)$, $a_{4,6}(n/2)$ and $a_{4,12}(n)$. Thus, we observe that both the expressions (obtained by Xia-Ma-Tian and obtained by our method) are the same for all the 21 mixed forms. 
	\end{rmk}
	
	\smallskip
	
	\noindent {\bf The $(p,k)$ parametrisation of $E_4(z)$}:\\
	We now make the following observation about obtaining representations of $E_4(\tau)$ and its duplications in terms of $p,k$ using our identities \eqref{21mixed}. 
	To get the identities given by \eqref{21mixed} (with $G(\tau), H(\tau)$ in place of $f_{4,6}(\tau), f_{4,6}(2\tau), f_{4,12}(\tau)$), Xia-Ma-Tian used the method of $(p,k)$-parametrisation. 
	Let 
	\begin{equation*}
		\begin{split}
			p & =  p(\tau): =  \frac{\theta^2(\tau)-\theta^2(3\tau)}{2 \theta^2(3\tau)} \qquad {\rm and} \qquad  k  = k(\tau): =  \frac{\theta^3(3\tau)}{\theta(\tau)}. \\
		\end{split}
	\end{equation*}
	In \cite[Theorems 1,2,4]{aaw2}, $(p,k)$-parametrisations of ${\mathcal{•}al F}(d\tau)$, $d=1,2,4$ are given and in \cite[(2.3)]{aalw1} the parametrisation is 
	given for the theta series $\theta(\tau)$ and $\theta(3\tau)$. Further, in \cite{aaw1}, the representations of $q^{j/24}\prod_{n=1}^\infty(1-q^{nj})$, 
	$j=1,2,3,4,6,12$ in terms of $p$ and $k$ have been established. In \cite{aw}, Alaca and Williams derived the representations of $E_4(d\tau)$, $d=1,2,3,4,6,12$ in terms of $p$ and $k$. In \cite{xia}, all the above $(p,k)$-parameterizations were used to get the identity \eqref{21mixed} for all 
	the 21 mixed forms. Since we establish \eqref{21mixed} using the theory of modular forms, we use these identities along with the $(p,k)$-parameterizations obtained in   \cite[Theorems 1,2,4]{aaw2}, \cite[(2.3)]{aalw1} and \cite{aaw1} (for the generating functions ${\mathcal F}(\tau)$, 
	$\theta(\tau)$ and $\Psi(\tau)$) to derive the representations of $E_4(\tau)$ and its duplications in terms of $p$ and $k$. 
	Using the $(p,k)$-parametrisation for the left-hand side generating functions in \eqref{21mixed}, we get a system of equations involving 
	$E_4(d\tau)$, $d\vert 12$, $f_{4,6}(\ell\tau)$, $\ell =1,2$ and $f_{4,12}(\tau)$ and the functions $p$ and $k$, from which we derive the 
	required representations of $E_4(d\tau)$, $d\vert 12$, $f_{4,6}(\ell\tau)$, $\ell =1,2$ and $f_{4,12}(\tau)$ in terms of $p, k$, which we give below. (We have used Mathematica software for doing these computations.) 
	
	\begin{equation*}
		\begin{split}
			E_{4}(\tau) & =  (1 + 4 p + 64 p^{2} + 178 p^{3} + 235 p^{4} + 178 p^{5} + 64 p^{6} + 4 p^{7} + p^{8}       ) k^{4}, \\
			E_{4}(2\tau) & =  (1 + 4 p + 4 p^{2} + 28 p^{3} + 70 p^{4} + 28 p^{5} + 4 p^{6} + 4 p^{7} + p^{8}          ) k^{4}, \\
			E_{4}(3\tau) & =  (1 + 4 p + 4 p^{2} - 2 p^{3} + 10 p^{4} + 28 p^{5} + \frac{31}{4} p^{6} - \frac{29}{4} p^{7} + \frac{1}{16}p^{8}) k^{4}, \\
			E_{4}(4\tau) & =  (1 + 4 p + 4 p^{2} - 2 p^{3} - 5 p^{4} - 2 p^{5} + 4 p^{6} + 4 p^{7} + p^{8}             ) k^{4},  \\
			E_{4}(14\tau) & =  (1 + 4 p + 4 p^{2} - 2 p^{3} - 5 p^{4} - 2 p^{5} + \frac{1}{4} p^{6} + \frac{1}{4} p^{7} + \frac{1}{16}p^{8}         ) k^{4}, \\
			f_{4,6}(\tau) & = (-1 - 4 p -\frac{119}{32}  p^{2} + \frac{115}{32} p^{3} - \frac{913}{128} p^{4} - \frac{1695}{64} p^{5} - \frac{2049}{256} p^{6} + \frac{1801}{256} p^{7} -\frac{1}{16} p^{8}) k^{4},\\
			f_{4,6}(2\tau) & =(\frac{1}{2} + \frac{9}{4} p + \frac{175}{64}p^{2} -\frac{83}{64}  p^{3} +\frac{673}{256} p^{4} + \frac{1583}{128} p^{5} + \frac{2081}{512} p^{6} - \frac{1737}{512} p^{7} +\frac{1}{32} p^{8})k^{4},\\
			f_{4,12}(\tau) & =(\frac{1}{2} p + \frac{7}{4} p^{2} + \frac{7}{4} p^{3} - \frac{7}{4} p^{5} - \frac{7}{4} p^{6} -\frac{1}{2} p^{7})k^{4}. \\
		\end{split}
	\end{equation*}
	Now using \eqref{gh}, we obtain representations of $G(\tau)$ and $H(\tau)$ in terms of $p,k$, which is given below. \\
	
	\begin{equation*}
		\begin{split}
			G(\tau) & =   \frac{2 p^{3} + 5 p^{4} - 5 p^{6} - 2 p^{7}}{16} k^{4} ~=~  \frac{p^3(1-p)(1+ p)(1+2p)(2+p)}{16} k^{4},\\
			H(\tau) & = \frac{8 p + 28 p^{2} + 22 p^{3} - 15 p^{4} - 28 p^{5} - 13 p^{6} - 2 p^{7}}{16} k^{4}~=~ \frac{p(1-p)(1+p)(1+2p)(2+p)^3}{16} k^{4}. \\
		\end{split}
	\end{equation*}
	
	\bigskip

	\section{General formulas and examples}
	
	\subsection{General formulas}
	
	The main results in \S 1 give modular properties of the generating functions of the higher figurate numbers, triangular numbers with coefficients and the mixed forms.
	In particular, from \thmref{modularity}, we know that the generating function for the higher figurate number with coefficients (up to an integral power of $q$)  is a modular form in $M_{t}(\Gamma_{0}(N), \chi)$ for some integer weight $t$, level $N$ and nebentypus $\chi$ (which are explicitly given). Moreover, from Theorems \ref{modularity}, \ref{lt}, \ref{odd-triangular} and \corref{mixed-m} we know that the generating functions   
	for $T_{\mathcal C}({\bf z})$, ${\mathcal M}_{l,t}({\bf x}, {\bf z})$, ${\mathcal M}_{s,t}({\bf y}, {\bf z})$ and ${\mathcal M}({\bf x}, {\bf y}, 
	{\bf z})$ are all modular forms (upto a power of $q$) of integral weight for $\Gamma_0(N)$ with character $\chi$. The level $N$ 
	and character $\chi$ all depend on the coefficients chosen for the corresponding forms. Therefore, finding explicit basis for the space of modular forms of integral weight for some level and character is equivalent to getting explicit formulas for the corresponding representation 
	numbers. We will elaborate this a little bit. 
	Recall that the representation numbers corresponding to $T_{\mathcal C}({\bf z})$ and  ${\mathcal M}_{s,t}({\bf y}, {\bf z})$ are 
	denoted respectively by  $\delta_k(c_1^{e_1}, \ldots, c_k^{e_k};n)$ (= $\delta_k({\mathcal C};n)$) and ${\mathcal N}_{s,t}(b_1^{\alpha_1}, \ldots, b_ v^{\alpha_v}; c_1^{e_1}, \ldots, c_k^{e_k}; n)$. We now give notations to the remaining two forms. Let ${\mathcal N}_{l,t}(a_1^{\beta_1}, \ldots, a_ u^{\beta_u}; c_1^{e_1}, \ldots, c_k^{e_k}; n)$ denote the representation number for the form ${\mathcal M}_{l,t}({\bf x}, {\bf z})$ and ${\mathcal N}(a_1^{\beta_1}, \ldots, a_ u^{\beta_u}; b_1^{\alpha_1}, \ldots, b_ v^{\alpha_v};c_1^{e_1}, \ldots, c_k^{e_k}; n)$ denote the representation 
	number for the form ${\mathcal M}({\bf x}, {\bf y}, {\bf z})$ (with $v+k$ even). Now, let ${\mathcal G}(\tau)$ be one of the generating functions  $\Phi_a^{\mathcal C}(\tau)$ (for a fixed $a \ge 3$), ${\Psi}_{\mathcal C}(\tau)$, $\psi_{l,t}(\tau)$, $\psi_{s,t}(\tau)$ or ${\Phi}(\tau)$ (corresponding to the forms 
	$F_{a, {\mathcal C}}(\bf x)$, $T_{\mathcal C}({\bf z})$, ${\mathcal M}_{l,t}({\bf x}, {\bf z})$, ${\mathcal M}_{s,t}({\bf y}, {\bf z})$ or ${\mathcal M}({\bf x}, {\bf y}, {\bf z})$ respectively). Then by Theorems \ref{modularity}, \ref{lt}, \ref{odd-triangular} and \corref{mixed-m}, we see that $q^{\ell {\bf h}/8} 
	{\mathcal G}(\tau)$ (where $\ell = \frac{(a-2)^2}{a}, a \ge 1, a\not= 2$) belongs to the space $M_m(\Gamma_0(N),\tilde{\chi})$, where $2u+v+k =2m$ ($u, v\ge 0, k\ge 1$), 
	$$
	N = \begin{cases}
		2 a \cdot\lcm[c_1, c_2, \dots, c_k] & {\rm ~for~} \Phi_a^{\mathcal C}(\tau), {\rm if~} a\ge 3 {\rm~is~odd},\\
		a \cdot\lcm[c_1, c_2, \dots, c_k] & {\rm ~for~} \Phi_a^{\mathcal C}(\tau), {\rm if~} a {\rm ~is~ even},\\
		2\cdot\lcm[c_1, c_2, \dots, c_k] & {\rm ~for~} {\Psi}_{\mathcal C}(\tau),\\
		\lcm[3~\lcm[a_1, \ldots, a_u], 2~\lcm[c_1, \ldots, c_k]] & {\rm ~for~} \psi_{l,t}(\tau),\\
		\lcm[4~\lcm[b_1, \ldots, b_v], 2~\lcm[c_1, \ldots, c_k]] & {\rm ~for~} \psi_{s,t}(\tau),\\
		\lcm[3\lcm[a_1, \ldots, a_u], \lcm[4~\lcm[b_1, \ldots, b_v], 2~\lcm[c_1, \ldots, c_k]]]  & {\rm ~for~} \Phi(\tau)\\
	\end{cases}
	$$  
	and 
	$$
	\tilde{\chi} = \begin{cases}
		\chi&  {\rm ~for~}  \Phi_a^{\mathcal C}(\tau), ~a \ge 3\\
		\chi&  {\rm ~for~} {\Psi}_{\mathcal C}(\tau),\\
		\chi' &  {\rm ~for~} \psi_{l,t}(\tau),\\
		\chi'' & {\rm ~for~} \psi_{s,t}(\tau),\\
		\omega &{\rm ~for~} \Phi(\tau),\\
	\end{cases}
	$$
	where $\chi$, $\chi'$, $\chi''$ and $\omega$ are as in Theorems \ref{modularity}, \ref{lt}, \ref{odd-triangular} and \corref{mixed-m}, respectively and ${\bf h} = $ $(e_{1}c_{1} + \ldots +e_{k}c_{k}),$ and if some $e_{i}=0$ then the corresponding coefficient $c_{i}$ will not appear. If $\{f_1, \ldots, f_{\nu_{m,N,\tilde{\chi}}}\}$ is a basis for the space $M_m(\Gamma_0(N), \tilde{\chi})$, then we can express $q^{{\bf h}/8} {\mathcal G}(\tau)$ in terms of this basis as follows. 
	\begin{equation}\label{formula}
		q^{{\bf h}/8} {\mathcal G}(\tau) = \sum_{i=1}^{\nu_{m,N,\tilde{\chi}}} t_{i} f_i(\tau), ~t_i\in {\mathbb C}.
	\end{equation}
	Let $p = \ell {\bf h}/8$  (where $\ell = \frac{(a-2)^2}{a}, a \ge 1, a\not= 2$) and denote by $a_{f_i}(n)$, the $n^{\rm th}$ Fourier coefficient of the $i$-th basis element $f_i$. 
	Then comparing the $n^{\rm th}$ Fourier coefficients in both the sides of \eqref{formula}, we get 
	\begin{equation}\label{formula1}
		\sum_{i=1}^{\nu_{m,N,\tilde{\chi}}} t_{i} a_{f_i}(n) = \begin{cases}
			{\mathcal N}_{a}(c_1^{e_1}, \ldots, c_k^{e_k}; n-p) &  {\rm ~for~} \Phi_{a}(\tau) ,~ a \ge3, \\ 
			\delta_k(c_1^{e_1}, \ldots, c_k^{e_k};n-p) & {\rm ~for~} {\Psi}_{\mathcal C}(\tau),\\
			{\mathcal N}_{l,t}(a_1^{\beta_1}, \ldots, a_ u^{\beta_u}; c_1^{e_1}, \ldots, c_k^{e_k}; n-p) &  {\rm ~for~} \psi_{l,t}(\tau),\\
			{\mathcal N}_{s,t}(b_1^{\alpha_1}, \ldots, b_ v^{\alpha_v}; c_1^{e_1}, \ldots, c_k^{e_k}; n-p) &  {\rm ~for~} \psi_{s,t}(\tau),\\
			{\mathcal N}(a_1^{\beta_1}, \ldots, a_ u^{\beta_u}; b_1^{\alpha_1}, \ldots, b_ v^{\alpha_v};c_1^{e_1}, \ldots, c_k^{e_k}; n-p) &  {\rm ~for~} \Phi(\tau).\\
		\end{cases}
	\end{equation}
	
	\smallskip
	
	\subsection{Examples}
	
	In this section we give some sample formulas for all the cases considered in our work. In the case of higher figurate numbers, we have given examples corresponding to $a=3$. Bases of the spaces of modular forms for obtaining these formulas are given in the appendix. In the case of 4 and 6 variables, we have indicated the space of modular forms used to get these formulas. 
	
	\subsubsection{Sample formula : 4 variables}
	\begin{equation*}
		\begin{split}
			M_{2}(\Gamma_0(4), \chi_{0}): \hspace{2cm} &\\ 
			\delta_4(2^{4}              ;n-1)                   & =                  -  \sigma(n)  +   3  \sigma(n/2)  -2  \sigma(n/4),      \\ 
			M_{2}(\Gamma_0(6), \chi_{0}): \hspace{2cm} &\\
			\delta_4(1^{2} 3^{2}        ;n-1)                   & =                  - \sigma(n) + \sigma(n/2) + 3 \sigma(n/3) -3 \sigma(n/6),  \\ 
			M_{2}(\Gamma_0(8), \chi_{0}): \hspace{2cm} &\\ 
			{\mathcal N}_{s,t}(1^{2}       ; 4^{2}       ;n-1 ) & = -  \sigma(n)  -  \sigma(n/2)  +   10  \sigma(n/4)  -8  \sigma(n/8),    \\   
			{\mathcal N}_{s,t}(2^{2}       ; 4^{2}       ;n-1)  & = -  \sigma(n)  +   3  \sigma(n/2)  -2  \sigma(n/4),      \\   
			{\mathcal N}_{s,t}( 2^{1}      ;2^{2}4^{1}   ;n-1)  & = -  \sigma(n)  +   3  \sigma(n/2)  -2  \sigma(n/4),     \\ 
			M_{2}(\Gamma_0(8), \chi_{8}):  \hspace{2cm}&\\
			\delta_4( 1^{2} 2^{1} 4^{1} ;n-1)                   & =  \sigma_{1; \chi_{8},{\bf 1}}(n),  \\
			{\mathcal N}_{s,t}(1^{1} 2^{1} ; 4^{2}       ;n-1)  & =  \sigma_{1; \chi_{8},{\bf 1}}(n),  \\  
			{\mathcal N}_{s,t}(1^{1}       ; 2^{2}4^{1}  ;n-1)  & =  \sigma_{1; \chi_{8},{\bf 1}}(n),  \\  
			M_{2}(\Gamma_0(12), \chi_{0}):   \hspace{2cm}&\\
			{\mathcal N}_{l,t}(1^{1} ; 2^{1} 6^{1} ;n-1)        & = -  \sigma(n)    -   3  \sigma(n/2)     +   3  \sigma(n/3)    +   4  \sigma(n/4)     +   9  \sigma(n/6)    -   12  \sigma(n/12),    \\   
			{\mathcal N}_{l,t}(2^{1} ; 2^{1} 6^{1} ;n-1)        & = -  \sigma(n)    +   3  \sigma(n/2)     -   3  \sigma(n/3)    -   2  \sigma(n/4)    +   9  \sigma(n/6)    -   6  \sigma(n/12).    
		\end{split}
	\end{equation*}
	
	
	\subsubsection{Sample formula : 6 variables}
	\begin{equation*}
		\begin{split}
			M_{3}(\Gamma_0(4), \chi_{-4}):   \hspace{2cm}&\\
			\delta_6(1^4 2^2               ;n-1)  &   =  \sigma_{2;\chi_{-4},1}(n), \\
			{\mathcal N}_{s,t}(1^2         ;2^4                ;n-1)  & =   \sigma_{2;\chi_{-4},1}(n), \\
			M_{3}(\Gamma_0(6), \chi_{-3}):   \hspace{2cm}&\\
			\delta_6(1^{5} 3^{1}           ;n-1)  &  =  -\frac{1}{8}\sigma_{2;{\bf 1},\chi_{-3}}(n) + \frac{1}{8}\sigma_{2;{\bf 1},\chi_{-3}}(n/2) + \frac{9}{8}\sigma_{2;\chi_{-3},{\bf 1}}(n) + \frac{9}{8}\sigma_{2;\chi_{-3},{\bf 1}}(n/2),   \\
			\delta_6(1^{1} 3^{5}           ;n-1)  &  =  -\frac{1}{8}\sigma_{2;{\bf 1},\chi_{-3}}(n) + \frac{1}{8}\sigma_{2;{\bf 1},\chi_{-3}}(n/2) + \frac{1}{8}\sigma_{2;\chi_{-3},{\bf 1}}(n) + \frac{1}{8}\sigma_{2;\chi_{-3},{\bf 1}}(n/2),   \\
			{\mathcal N}_{l,t}(1^{1}      ;1^{2} 3^{2}      ;n-1)     & =  -\frac{1}{2}\sigma_{2;{\bf 1},\chi_{-3}}(n) + \frac{1}{2}\sigma_{2;{\bf 1},\chi_{-3}}(n/2) + \frac{3}{2}\sigma_{2;\chi_{-3},{\bf 1}}(n) + \frac{3}{2}\sigma_{2;\chi_{-3},{\bf 1}}(n/2),    \\
			{\mathcal N}_{l,t}(2^{1}      ;1^{2} 3^{2}      ;n-1)     & =  \frac{1}{4}\sigma_{2;{\bf 1},\chi_{-3}}(n)  - \frac{1}{4}\sigma_{2;{\bf 1},\chi_{-3}}(n/2) + \frac{3}{4}\sigma_{2;\chi_{-3},{\bf 1}}(n) + \frac{3}{4}\sigma_{2;\chi_{-3},{\bf 1}}(n/2),    
		\end{split}
	\end{equation*}
	
	\begin{equation*}
		\begin{split}
			&M_{3}(\Gamma_0(8), \chi_{-4}):   \hspace{14cm}\\
			&\hspace{2cm} \delta_6(2^4 4^2               ;n-2)    =   \sigma_{2;\chi_{-4},{\bf 1}}(n/2), \\
			&\hspace{2cm} \delta_6(4^6 ;n-3)    =  -\frac{1}{16}\sigma_{2;{\bf 1},\chi_{-4}}(n)  + \frac{1}{16}   \sigma_{2;{\bf 1},\chi_{-4}}(n/2)  + \frac{1}{16}    \sigma_{2;\chi_{-4},{\bf 1}}(n)  -\frac{1}{4}   \sigma_{2;\chi_{-4},{\bf 1}}(n/2),\\
			&\hspace{2cm} {\mathcal N}_{s,t}(1^{2} 2^{2} ;4^2                ;n-1)   =  \sigma_{2;\chi_{-4},{\bf 1}}(n),                                                                                                                  \\      
			&\hspace{2cm} {\mathcal N}_{s,t}(1^{2}       ;4^4                ;n-2)   =  -\frac{1}{4}\sigma_{2;{\bf 1},\chi_{-4}}(n)   + \frac{1}{4}  \sigma_{2;{\bf 1},\chi_{-4}}(n/2)    + \frac{1}{4}  \sigma_{2;\chi_{-4},{\bf 1}}(n), \\      
			&\hspace{2cm} {\mathcal N}_{s,t}(2^{2}       ;4^4                ;n-2)   =   \sigma_{2;\chi_{-4},{\bf 1}}(n/2),                                                \\                                                                     
			&\hspace{2cm} {\mathcal N}_{s,t}(2^{2}       ;2^4                ;n-1)   =   \sigma_{2;\chi_{-4},{\bf 1}}(n)  -4    \sigma_{2;\chi_{-4},{\bf 1}}(n/2),         \\                                                                     
			&\hspace{2cm} {\mathcal N}_{s,t}(1^{1} 2^{1} ;1^2 2^{1} 4^{1}    ;n-1)   =   \sigma_{2;\chi_{-4},{\bf 1}}(n),  \\  
			&\hspace{2cm} {\mathcal N}_{s,t}(1^{4}       ;4^2                ;n-1)   =  -\sigma_{2;{\bf 1},\chi_{-4}}(n)  +   \sigma_{2;{\bf 1},\chi_{-4}}(n/2)  + 2    \sigma_{2;\chi_{-4},{\bf 1}}(n),    \\
			&\hspace{2cm} {\mathcal N}_{s,t}(2^{4}       ;4^2                ;n-1)   =  \sigma_{2;{\bf 1},\chi_{-4}}(n)     \sigma_{2;{\bf 1},\chi_{-4}}(n/2)     + \sigma_{2;\chi_{-4},{\bf 1}}(n)   - 4  \sigma_{2;\chi_{-4},{\bf 1}}(n/2),          
		\end{split}
	\end{equation*}
	
	\begin{equation*}
		\begin{split}
			&M_{3}(\Gamma_0(8), \chi_{-8}):   \hspace{14cm}\\
			&\hspace{2cm} \delta_6(1^{2} 2^{1} 4^{3}     ;n-2)    =   \frac{1}{6} \sigma_{2;\chi_{-8},{\bf 1}}(n) - \frac{1}{6} \tau_{3,8,\chi_{-8}}(n),  \\                                     
			&\hspace{2cm} {\mathcal N}_{s,t}(1^{1} 2^{3} ;4^{2}              ;n-1)   =   \frac{2}{3} \sigma_{2;\chi_{-8},{\bf 1}}(n) + \frac{1}{3} \tau_{3,8,\chi_{-8}} (n),  \\
			&\hspace{2cm} {\mathcal N}_{s,t}(1^{3} 2^{1} ;4^{2}              ;n-1)   =   \frac{4}{3} \sigma_{2;\chi_{-8},{\bf 1}}(n) - \frac{1}{3} \tau_{3,8,\chi_{-8}} (n),  \\
			&\hspace{2cm} {\mathcal N}_{s,t}(1^{2}       ;1^{2} 2^{1} 4^{1}  ;n-1)   =   \frac{4}{3} \sigma_{2;\chi_{-8},{\bf 1}}(n) - \frac{1}{3} \tau_{3,8,\chi_{-8}} (n),  \\
			&\hspace{2cm} {\mathcal N}_{s,t}(2^{2}       ;1^{2} 2^{1} 4^{1}  ;n-1)  =   \frac{2}{3} \sigma_{2;\chi_{-8},{\bf 1}}(n) + \frac{1}{3} \tau_{3,8,\chi_{-8}} (n),  \\
			&\hspace{2cm} {\mathcal N}_{s,t}(1^{1} 2^{1} ;2^{4}              ;n-1)   =   \frac{2}{3} \sigma_{2;\chi_{-8},{\bf 1}}(n) + \frac{1}{3} \tau_{3,8,\chi_{-8}} (n),  \\
			&\hspace{2cm} {\mathcal N}_{s,t}(1^{1} 2^{1} ;4^{4}              ;n-2)   =   \frac{1}{6} \sigma_{2;\chi_{-8},{\bf 1}}(n) - \frac{1}{6} \tau_{3,8,\chi_{-8}}(n),  \\
			&\hspace{2cm} {\mathcal N}_{s,t}( 1^{3}; 2^{2} 4^{1}         ;n-1)       = \frac{4}{3} \sigma_{2;\chi_{-8},{\bf 1}}(n) - \frac{1}{3} \tau_{3,8,\chi_{-8}} (n), \\
			&\hspace{2cm} {\mathcal N}_{s,t}(1^{1} 2^{2}; 2^{2} 4^{1}    ;n-1)       = \frac{2}{3} \sigma_{2;\chi_{-8},{\bf 1}}(n) + \frac{1}{3} \tau_{3,8,\chi_{-8}} (n), 
		\end{split}
	\end{equation*}
	\begin{equation*}
		\begin{split}
			&M_{3}(\Gamma_0(12), \chi_{-3}):    \hspace{2cm}\\
			&{\mathcal N}_{l,s,t}(1^{1} ; 1^{1}3^{1}  ; 2^{1}6^{1} ;n-1)     = - \frac{1}{2}  \sigma_{2;{\bf 1 },\chi_{-3}}(n) +   \frac{1}{2}   \sigma_{2;{\bf 1 },\chi_{-3}}(n/2) +       \frac{3}{2}   \sigma_{2;\chi_{-3},{\bf 1 }}(n)  +   \frac{3}{2}   \sigma_{2;\chi_{-3},{\bf 1 }}(n/2),     \\ 
			& M_{3}(\Gamma_0(12), \chi_{-4}): \\
			& {\mathcal N}_{l,s,t}(1^{1} ; 1^{2}       ; 2^{1}6^{1} ;n-1)     =    \frac{16}{7}   \sigma_{2;\chi_{-4},{\bf 1 }}(n) +  \frac{72}{7}    \sigma_{2;\chi_{-4},{\bf 1 }}(n/3) - \frac{9}{7}   \tau_{3,12,\chi_{-4};1} (n) - \frac{30}{7}  \tau_{3,12,\chi_{-4};2} (n).  \\ 
		\end{split}
	\end{equation*}

\subsubsection{Sample formulas for the forms  considered in the work of Xia-Ma-Tian \cite{xia}:} We use a basis of $M_{4}(\Gamma_{0}(12))$ to obtain these formulas. 
\begin{equation*}
\begin{split}
{\mathcal N}_{l,s}(2^{1}             ; 1^{3} 3^{3}                     ; n   ) &  = 2\sigma_{3}(n)  -  18\sigma_{3}(n/3) -  32\sigma_{3}(n/4)  +  288\sigma_{3}(n/12)   +  4a_{4,12}(n), \\
{\mathcal N}_{l,s,t}( 2^{1}           ; 1^{2} 3^{2} ; 2^{1} 6^{1}      ; n-1 ) &  = \frac{1}{2}\sigma_{3}(n) -    \frac{1}{2}\sigma_{3}(n/2) -    \frac{9}{2}\sigma_{3}(n/3)  +  \frac{9}{2}\sigma_{3}(n/6)    +  \frac{1}{2}a_{4,12}(n), \\
{\mathcal N}_{l,s,t}( 2^{1}           ; 1^{1} 3^{1} ; 2^{2} 6^{2}      ; n-2 ) &  = \frac{1}{8}\sigma_{3}(n) -    \frac{1}{8}\sigma_{3}(n/2) -    \frac{9}{8}\sigma_{3}(n/3)  +  \frac{9}{8}\sigma_{3}(n/6)    -    \frac{1}{8}a_{4,12}(n), \\
\end{split}
\end{equation*}
\begin{equation*}
\begin{split}
{\mathcal N}_{l,s,t}( 4^{1}           ; 1^{2}       ; 1^{1} 3^{1} 6^{2}; n-2 ) &  = \frac{1}{8}\sigma_{3}(n) -    \frac{1}{8}\sigma_{3}(n/2) -    \frac{9}{8}\sigma_{3}(n/3)  +  \frac{9}{8}\sigma_{3}(n/6)  -    \frac{1}{2}a_{4,6}(n)\\
& \quad  -  a_{4,6}(n/2) +  \frac{3}{8}a_{4,12}(n), \\
{\mathcal N}_{l,s,t}( 4^{1}           ; 3^{2}       ; 1^{1} 2^{2} 3^{1}; n-1 ) &  = \frac{1}{8}\sigma_{3}(n) -    \frac{1}{8}\sigma_{3}(n/2) -    \frac{9}{8}\sigma_{3}(n/3)  +  \frac{9}{8}\sigma_{3}(n/6)  +  \frac{1}{2}a_{4,6}(n)\\ 
& \quad +  a_{4,6}(n/2) +  \frac{3}{8}a_{4,12}(n), \\
{\mathcal N}_{l,s}( 1^{1} 2^{1}       ; 3^{4}                          ; n   ) &  = \frac{4}{5}\sigma_{3}(n) -    \frac{6}{5}\sigma_{3}(n/2) +  \frac{156}{5}\sigma_{3}(n/3) +  \frac{32}{5}\sigma_{3}(n/4) -   \frac{234}{5}\sigma_{3}(n/6) \\ 
& \quad +  \frac{1248}{5}\sigma_{3}(n/12)   + \frac{16}{5}a_{4,6}(n) +  \frac{32}{5}a_{4,6}(n/2) +  2a_{4,12}(n), \\
{\mathcal N}_{l,s}( 1^{1} 2^{1}       ; 1^{2} 3^{2}                    ; n   ) &  = 4\sigma_{3}(n) -  2\sigma_{3}(n/2) -  36\sigma_{3}(n/3) -  32\sigma_{3}(n/4) +  18\sigma_{3}(n/6) \\& \quad +  288\sigma_{3}(n/12)  +  6a_{4,12}(n), \\
\end{split}
\end{equation*}
\begin{equation*}
\begin{split}
{\mathcal N}_{l,s}( 1^{1} 2^{1}       ; 1^{4}                          ; n  )  &  =  \frac{52}{5}\sigma_{3}(n) -    \frac{78}{5}\sigma_{3}(n/2) +  \frac{108}{5}\sigma_{3}(n/3) +  \frac{416}{5}\sigma_{3}(n/4) -    \frac{162}{5}\sigma_{3}(n/6)  \\ 
& \quad + \frac{864}{5}\sigma_{3}(n/12) +   \frac{48}{5}a_{4,6}(n) +  \frac{96}{5}a_{4,6}(n/2) -  6a_{4,12}(n), \\
{\mathcal N}_{l,t}( 1^{1} 2^{1}                     ; 1^{2} 3^{2}      ; n-1 ) &  = \sigma_{3}(n) -  \sigma_{3}(n/2) -  9\sigma_{3}(n/3)  +  9\sigma_{3}(n/6),     \\
{\mathcal N}_{l,t}( 1^{1} 2^{1}                     ; 2^{2} 6^{2}      ; n-2 ) &  = \frac{3}{8}\sigma_{3}(n) -    \frac{19}{8}\sigma_{3}(n/2) -    \frac{27}{8}\sigma_{3}(n/3) +  2\sigma_{3}(n/4) +  \frac{171}{8}\sigma_{3}(n/6) \\ 
& \quad -  18\sigma_{3}(n/12)  -    \frac{3}{8}a_{4,12}(n), \\
{\mathcal N}_{l,s}( 2^{1} 4^{1}       ; 1^{2} 3^{2}                    ; n  )  &  =  \sigma_{3}(n) +  \sigma_{3}(n/2) -  9\sigma_{3}(n/3) -  32\sigma_{3}(n/4) -  9\sigma_{3}(n/6) \\& \quad +  288\sigma_{3}(n/12)   +  3a_{4,12}(n), \\
{\mathcal N}_{l,t}( 2^{1} 4^{1}                     ; 1^{2}.3^{2}      ; n-1)  &  =  \frac{1}{4}\sigma_{3}(n) -    \frac{1}{4}\sigma_{3}(n/2) -    \frac{9}{4}\sigma_{3}(n/3)  +  \frac{9}{4}\sigma_{3}(n/6)    +  \frac{3}{4}a_{4,12}(n), \\
{\mathcal N}_{l,s}( 2^{3}             ; 1^{1} 3^{1}                    ; n  )  &  =  2\sigma_{3}(n)  -  18\sigma_{3}(n/3) -  32\sigma_{3}(n/4)  +  288\sigma_{3}(n/12),    \\
\end{split}
\end{equation*}
\begin{equation*}
\begin{split}
{\mathcal N}_{l,t}( 2^{3}                           ; 2^{1} 6^{1}      ; n-1)  &  =  \sigma_{3}(n) -  9\sigma_{3}(n/2) -  9\sigma_{3}(n/3) +  8\sigma_{3}(n/4) +  81\sigma_{3}(n/6) -  72\sigma_{3}(n/12),    \\
{\mathcal N}_{l,s}( 4^{3}             ; 1^{1} 3^{1}                    ; n  )  &  =  \frac{1}{5}\sigma_{3}(n) -    \frac{9}{5}\sigma_{3}(n/2) +  \frac{9}{5}\sigma_{3}(n/3) +  \frac{128}{5}\sigma_{3}(n/4) -    \frac{81}{5}\sigma_{3}(n/6) \\ 
& \quad +  \frac{1152}{5}\sigma_{3}(n/12)  +  \frac{9}{5}a_{4,6}(n) +  \frac{18}{5}a_{4,6}(n/2),  \\
{\mathcal N}_{l,s}( 1^{2} 2^{1}       ; 1^{1} 3^{1}                    ; n  )  &  =  8\sigma_{3}(n) -  6\sigma_{3}(n/2) -  72\sigma_{3}(n/3) -  32\sigma_{3}(n/4) +  54\sigma_{3}(n/6) \\& \quad +  288\sigma_{3}(n/12)    +  6a_{4,12}(n), \\
{\mathcal N}_{l,t}( 1^{2} 2^{1}                     ; 2^{1} 6^{1}      ; n-1)  &  =  \frac{5}{2}\sigma_{3}(n) -    \frac{21}{2}\sigma_{3}(n/2) -    \frac{45}{2}\sigma_{3}(n/3) +  8\sigma_{3}(n/4) +  \frac{189}{2}\sigma_{3}(n/6)\\ 
& \quad  -  72\sigma_{3}(n/12)   -    \frac{3}{2}a_{4,12}(n), \\
{\mathcal N}_{l,s}( 1^{1} 2^{2}       ; 1^{1} 3^{1}                    ; n  )  &  =  \frac{16}{5}\sigma_{3}(n) -    \frac{24}{5}\sigma_{3}(n/2) +  \frac{144}{5}\sigma_{3}(n/3) +  \frac{128}{5}\sigma_{3}(n/4) -    \frac{216}{5}\sigma_{3}(n/6)\\ 
& \quad +  \frac{1152}{5}\sigma_{3}(n/12) +  \frac{24}{5}a_{4,6}(n) +  \frac{48}{5}a_{4,6}(n/2),  \\
\end{split}
\end{equation*}
\begin{equation*}
\begin{split}
{\mathcal N}_{l,s}( 2^{1} 4^{2}       ; 1^{1} 3^{1}                    ; n  )  &  =  \frac{1}{2}\sigma_{3}(n) +  \frac{3}{2}\sigma_{3}(n/2) -    \frac{9}{2}\sigma_{3}(n/3) -  32\sigma_{3}(n/4) -    \frac{27}{2}\sigma_{3}(n/6)\\
& \quad +  288\sigma_{3}(n/12)   +  \frac{3}{2}a_{4,12}(n), \\
{\mathcal N}_{l,s}(1^{1} 2^{1} 4^{1} ; 1^{1} 3^{1}                     ; n   ) &  = 2\sigma_{3}(n)  -  18\sigma_{3}(n/3) -  32\sigma_{3}(n/4)  +  288\sigma_{3}(n/12)   +  6a_{4,12}(n), \\
\end{split}
\end{equation*}
\begin{equation*}
\begin{split}
{\mathcal N}_{l,t}( 1^{1} 2^{1}.4^{1}                ; 2^{1} 6^{1}     ; n-1)  &  =  \frac{1}{4}\sigma_{3}(n) +  \frac{15}{4}\sigma_{3}(n/2) -    \frac{9}{4}\sigma_{3}(n/3) -  4\sigma_{3}(n/4) \\
& \quad -    \frac{135}{4}\sigma_{3}(n/6) +  36\sigma_{3}(n/12)   +  \frac{3}{4}a_{4,12}(n). \\
\end{split}
\end{equation*}

\smallskip

\subsubsection{Sample formulas for higher figurate number for $a=3$:}
We use a basis of $M_{4}(\Gamma_0(24), \chi_{0})$ and $M_{4}(\Gamma_0(24), \chi_{8})$, respectively as given in \cite[\S 4]{rss-ijnt}, to obtain the following two formulas.
\begin{equation*}
	\begin{split}
		{\mathcal N}_{3}(2^4 4^4; n-1) & =  \frac{1}{10} \sigma_3(n)  -\frac{9}{10} \sigma_3(n/2)  -\frac{1}{10} \sigma_3(n/3) + \frac{8}{10} \sigma_3(n/4) + \frac{9}{10} \sigma_3(n/6) \\ \quad &   -\frac{8}{10} \sigma_3(n/12)  + 
		\frac{7}{30} a_{4,6}(n) + \frac{7}{15} a_{4,6}(n/2) + 0 a_{4,6}(n/4) \\ \quad & + \frac{1}{4} a_{4,8}(n) + \frac{7}{4} a_{4,8}(n/3) + \frac{1}{6} a_{4,12}(n) -  a_{4,12}(n/2) + \frac{1}{4} a_{4,24}(n),
	\end{split}
\end{equation*}
\begin{equation*}
	\begin{split}
		{\mathcal N}_{3}(1^{2} 2^{1} 4^{5}; n-1) & = \frac{32}{451} \sigma_{3, \chi_{8}, {\bf 1}}(n) +  \frac{32}{451} \sigma_{3, \chi_{8}, {\bf 1}}(n/2) + \frac{3401}{16236} a_{4,8, \chi_{8};1}(n) + \frac{910}{451} a_{4,8, \chi_{8};1}(n/3) \\  \quad & + \frac{4}{451}a_{4,8, \chi_{8};2}(n) + \frac{906}{451} a_{4,8, \chi_{8};2}(n/3) - \frac{283}{1476}a_{4,24, \chi_{8};1}(n)  - \frac{173}{492}a_{4,24, \chi_{8};2}(n) \\  \quad & + \frac{427}{492}a_{4,24, \chi_{8};3}(n) -\frac{28}{369}a_{4,24, \chi_{8};4}(n) + \frac{142}{41} a_{4,24, \chi_{8};5}(n) + \frac{17}{492} a_{4,24, \chi_{8};6}(n). \\
	\end{split}
\end{equation*}
The following formulas are obtained using the basis of modular forms of weight $5$ and level $24$ (as given in Table C in the appendix). 
\begin{equation*}
	\begin{split}
		{\mathcal N}_{3}(1^{2} 2^{5} 4^{3} ; n-1) & =  \frac{256}{6897}  \sigma_{4, \chi_{-8}, {\bf 1}}(n)  -\frac{256}{6897}    \sigma_{4, \chi_{-8}, {\bf 1}}(n/3)  -\frac{1}{110352}  \tau_{5,8, \chi_{-8};1'}(n) -\frac{1}{1452}    \tau_{5,8, \chi_{-8};1'}(n/3) \\ 
		& \quad  + \frac{7}{27588}   \tau_{5,8, \chi_{-8};2}(n) -\frac{32941}{27588}  \tau_{5,8, \chi_{-8};2}(n/3) +  \frac{173}{44}    \tau_{5,8, \chi_{-8};3}(n) -\frac{14647}{4598}   \tau_{5,8, \chi_{-8};3}(n/3)\\ 
		& \quad +\frac{116}{121}  \tau_{5,24, \chi_{-8};1}(n) -\frac{251}{484}   \tau_{5,24, \chi_{-8};2}(n) + \frac{3821}{1452}   \tau_{5,24, \chi_{-8};3}(n) -\frac{6517}{1452}  \tau_{5,24, \chi_{-8};4}(n) \\ 
		& \quad  +  \frac{1753}{484}   \tau_{5,24, \chi_{-8};5}(n) +  -\frac{1151}{132}   \tau_{5,24, \chi_{-8};6}(n) + \frac{9}{1936}   \tau_{5,24, \chi_{-8};7}(n) +\frac{16}{11}  \tau_{5,24, \chi_{-8};8}(n), \\ 
	\end{split}
\end{equation*}
\begin{equation*}
	\begin{split}
		{\mathcal N}_{3}(1^{4} 2^{2} 4^{4} ; n-1) & = \frac{16}{305}   \sigma_{4, \chi_{-4}, {\bf 1}}(n) +   \frac{16}{305}  \sigma_{4, \chi_{-4}, {\bf 1}}(n/3)  + \frac{4}{305}    \tau_{5,4,\chi_{-4}}(n)  +   4    \tau_{5,4,\chi_{-4}}(n/2) \\ 
		& \quad  +   \frac{5189}{305}    \tau_{5,4,\chi_{-4}}(n/3) -4    \tau_{5,4,\chi_{-4}}(n/6) -\frac{326}{183}   \tau_{5,12,\chi_{-4};1}(n)  +  \frac{57}{61}    \tau_{5,12,\chi_{-4};1}(n/2) \\ 
		& \quad  +   \frac{2}{61}    \tau_{5,12,\chi_{-4};2}(n)  +   \frac{347}{183}    \tau_{5,12,\chi_{-4};2}(n/2) +  \frac{88}{3}    \tau_{5,12,\chi_{-4};3}(n) +   8    \tau_{5,12,\chi_{-4};4}(n) \\ 
		& \quad -\frac{4}{3} \tau_{5,12,\chi_{-4};4}(n/2), \\
	\end{split}
\end{equation*}
\begin{equation*}
	\begin{split}
		{\mathcal N}_{3}(2^{8} 4^{2} ; n-1) & =   \frac{16}{305}   \sigma_{4, \chi_{-4}, {\bf 1}}(n)  -\frac{256}{305}   \sigma_{4, \chi_{-4}, {\bf 1}}(n/2)  +  \frac{16}{305}   \sigma_{4, \chi_{-4}, {\bf 1}}(n/3) -\frac{256}{305}   \sigma_{4, \chi_{-4}, {\bf 1}}(n/6) \\
		\quad &   +   \frac{4}{305}    \tau_{5,4,\chi_{-4}}(n)  +   \frac{1156}{305}    \tau_{5,4,\chi_{-4}}(n/2)  +   \frac{2444}{305}   \tau_{5,4,\chi_{-4}}(n/3)  -\frac{64}{305}    \tau_{5,4,\chi_{-4}}(n/6) \\
		\quad &  +   \frac{40}{183}    \tau_{5,12,\chi_{-4};1}(n)  +   \frac{57}{61}    \tau_{5,12,\chi_{-4};1}(n/2)  +   \frac{2}{61}    \tau_{5,12,\chi_{-4};2}(n)  +   \frac{164}{183}    \tau_{5,12,\chi_{-4};2}(n/2) \\
		\quad &  +   \frac{1312}{183}    \tau_{5,12,\chi_{-4};3}(n)  -\frac{241}{61}    \tau_{5,12,\chi_{-4};3}(n/2) +   \frac{456}{61}   \tau_{5,12,\chi_{-4};4}(n) -\frac{1}{183} \tau_{5,12,\chi_{-4};4}(n/2). \\  
	\end{split}
\end{equation*}

\bigskip

\smallskip

{\small 
	\noindent {\bf Acknowledgements:} 
	This work started when both the authors were at the Harish-Chandra Research Institute (HRI), Prayagraj. Some part of the 
	work was done when both the authors were visiting NISER, Bhubaneswar. We thank both the institutes for their support. 

\section*{Appendix-I}

\begin{center}
 \textbf{Explicit bases for the space $M_{k}(\Gamma_0(N),\chi)$, $k = 2,3, 5$; $N=6,8,12, 24$}
 \end{center}

\bigskip

Before we give the tables, we list below the notations used. The vector space $M_k(\Gamma_0(N),\chi)$ is decomposed as 
${\mathcal E}_(\Gamma_0(N),\chi)$, the space generated by the Eisenstein series and $S_k(\Gamma_0(N),\chi)$, the subspace of 
cusp forms. The dimensions of these subspaces are denoted by $e_{k,N,\chi}$ and $s_{k,N,\chi}$ respectively such that the dimension $\nu_{k,N,\chi}$ of the space $M_k(\Gamma_0(N),\chi)$ is the sum $e_{k,N,\chi} + s_{k,N,\chi}$. 

The basis elements of the space of cusp forms are given in terms of newforms and their duplications. We denote a newform of 
weight $k$, level $N$ and character $\chi$ by $\Delta_{k,N,\chi;j}(\tau)$, if there is more than one newform of a given weight, level and character. If it is unique, we drop the subscript $j$ and write simply as $\Delta_{k,N,\chi}(\tau)$. Further, if the character is the 
principal character, then we also drop the subscript $\chi$ from the notation. The $n$-th Fourier coefficient of the newform 
$\Delta_{k,N,\chi;j}(n)$ is denoted as $\tau_{k,N,\chi;j}(n)$. If the space of newforms is one dimensional, we drop the subscript $j$ and also drop the subscript $\chi$, if it is principal. 

For an integer $k \ge 2,$ let $E_k$ denote the normalized Eisenstein series of weight $k$ for the full modular group $SL_2({\mathbb Z})$, given by $\displaystyle{E_k(\tau) = 1 - \frac{2k}{B_k}\sum_{n\ge 1} \sigma_{k-1}(n) q^n}$, where $q=e^{2 i\pi \tau}$,$\tau \in {\mathbb H}$, $\sigma_r(n) = \sum_{d\vert n} d^{r}$ and $B_k$ is the $k^{\rm th}$ Bernoulli number defined by following identitty $\displaystyle{\frac{x}{e^x-1} = \sum_{m=0}^\infty \frac{B_m}{m!} x^m}$. 
 Apart from the Eisenstein series $E_k(\tau)$, we also use the following generalized Eisenstein series. Suppose that $\chi$ and $\psi$ are primitive Dirichlet characters with conductors $M$ and $N$, respectively. For a positive integer $k$,  let 
$\displaystyle{E_{k,\chi,\psi}(\tau) :=  - \frac{B_{k,\psi}}{2k} ~ \delta_{M,1} + \sum_{n\ge 1}\left(\sum_{d\vert n} \psi(d) \cdot \chi(n/d) d^{k-1}\right) q^n}$,
where $\delta_{m,n}$ is the Kronecker delta function and 
$B_{k,\psi}$ is the  generalized Bernoulli number with respect to the character $\psi$. 
Then, the Eisenstein series $E_{k,\chi,\psi}(\tau)$ belongs to the space $M_k(\Gamma_0(MN), \chi \psi)$, provided $\chi(-1)\psi(-1) = (-1)^k$ and $MN\not=1$.  The $n$-th Fourier coefficient ($n\ge 1$) of the Eisenstein series $E_{k,\chi,\psi}(\tau)$ 
is denoted by $\sigma_{k-1;\chi,\psi}(n)$. 

An eta-quotient is defined as a finite product of integer powers of $\eta(\tau)$ where, \linebreak  $\eta(\tau)=q^{1/24} \prod_{n\ge1}(1-q^n)$ is the Dedekind eta function  and we denote it as follows. 
\linebreak
$\displaystyle{\delta_{1}^{r_{\delta_{1}}} \delta_{2}^{r_{\delta_{2}}} \cdots \delta_{s}^{r_{\delta_{s}}} := \prod_{i=1}^s \eta^{r_{\delta_{i}}}(\delta_{i} \tau )}$,
where $\delta_{i}$'s are positive integers and $r_{\delta_{i}}$'s are non-zero integers.
 
To get explicit formulas in the case of $4$ and $6$ variables, we use the following basis for the vector spaces $M_2(\Gamma_0(N),\chi)$ and 
$M_3(\Gamma_0(N),\chi)$. These are presented in two separate tables (Table $A$ and Table $B$) below. We get the explicit formulas in the case of higher figurate number (for $a =3,4$) by considering the $8$ and  $10$ variables. We give explicit basis for the vector space $M_5(\Gamma_0(N),\chi)$ in Table $C.$ The character $\chi_0$ denotes the principal character modulo the respective level $N$ and $\chi_m$ denotes the Kronecker symbol $\left(\frac{m}{\cdot}\right)$.


{

\newpage
The following eta-quotients are used to give explicit basis of modular spaces $M_{2}(\Gamma_0(N),\chi)$.
\smallskip
{\Large
\begin{center}
\begin{tabular}{ll}
$\Delta_{2,24,\chi_{0}}(\tau) = 2^{1}4^{1}6^{1}12^{1}$ ,  &\\
$\Delta_{2, 24, \chi_{8};1}(\tau) = 1^{1}2^{-1}3^{-1}6^{4}8^{2}12^{-1}$ , & $\Delta_{2, 24, \chi_{8};2}(\tau)   =  1^{2}4^{-1}6^{-1}8^{1}12^{4}24^{-1}$ ,  \\
$\Delta_{2, 24, \chi_{24};1}(\tau) =  1^{1}2^{-1}3^{-1}4^{1}6^{4}12^{-2}24^{2}$ , & $ \Delta_{2, 24, \chi_{24};2}(\tau)   =  1^{2}2^{-2}4^{4}6^{1}8^{-1}12^{-1}24^{1} .$    \\ 
 \end{tabular}
\end{center}
}

\bigskip
\begin{center}
\textbf{Table A. Basis for $M_{2}(\Gamma_0(N),\chi)$}\\

\bigskip
\begin{tabular}{|p{1.75 cm }|p{0.8 cm}|p{0.7 cm }|p{6cm}|p{4.75 cm }|}

\hline

{\textbf{ Space}} & \multicolumn{2}{|c|}{\textbf{Dimension}} & 
 \multicolumn{2}{|c|}{\textbf{Basis for  $M_2(\Gamma_0(N),\chi)$}}\\ \cline{2-5}
&&&&\\
 {$(N,\chi)$}     & $e_{2,N,\chi}$ & $s_{2,N,\chi}$  & {Basis for $ {\mathcal E}_2(\Gamma_0(N),\chi)$ }& {Basis for $S_2(\Gamma_0(N),\chi)$} \\
 &&&&\\
\hline 
&&&&\\
 $(6,\chi_{0})$   & 3   &  0   &  $\{ \phi_{1,b}, b\vert 6, b \neq 1\}$               &  \{0\}  \\
                           
\hline                     
&&&&\\                     
 $(8,\chi_{0})$   &  3  &  0   &   $\{ \phi_{1,b}, b|8, b \neq 1 \}$               & \{0\} \\
                           
\hline                     
&&&&\\                     
 $(8,\chi_{8})$   &  2  &  0   & $\{E_{2,{\bf 1},\chi_{8}}(\tau)$, $ E_{2,\chi_{8},{\bf 1}}(\tau)\}$ & \{0\} \\
                           
\hline                     
&&&&\\                     
 $(12,\chi_{0})$  &  5  &  0   & $\{ \phi_{1,b}, b|12, b \neq 1\}$   & \{0\}  \\
                                    
\hline                     
&&&&\\                     
 $(12,\chi_{12})$ &  4  &  0  & $\{E_{2, {\bf 1}, \chi_{12}}(\tau),  E_{2, \chi_{12}, {\bf 1}}(\tau), $ $ E_{2, \chi_{4}, \chi_{3}}(\tau),  E_{2, \chi_{3}, \chi_{4}}(\tau)\}$ & \{0\} \\

\hline                     
&&&&\\                     
 $(24,\chi_{0})$  &  7  &  1   & $\{ \phi_{1,b}, b|24, b \neq 1\}$ & $\{\Delta_{2, 24, \chi_{0}}(\tau) \}$ \\
                           
\hline                     
&&&&\\                     
 $(24,\chi_{8})$  &  4  &  2   & $\{E_{2, {\bf 1}, \chi_{8}}(az), a|3;$ $E_{2, \chi_{8}, {\bf 1}}(bz), b|3\}$ & $\{\Delta_{2, 24, \chi_{8};1}(\tau),  
 \Delta_{2, 24, \chi_{8};2}(\tau)\}$ \\
                           
 \hline                     
&&&&\\                     
 $(24,\chi_{12})$ &  8  &  0  & $\{E_{2, {\bf 1}, \chi_{12}}(az), a|2;  E_{2, \chi_{12}, {\bf 1}}(bz), b|2;$ $E_{2, \chi_{4}, \chi_{3}}(cz), c|2;  
 E_{2, \chi_{3}, \chi_{4}}(dz), d|2\}$ & \{0\} \\
                           
\hline                     
&&&&\\                     
 $(24,\chi_{24})$ &  4  &  2   & $\{E_{2, {\bf 1}, \chi_{24}}(\tau),  E_{2, \chi_{24}, {\bf 1}}(\tau), $ $ E_{2, \chi_{8}, \chi_{3}}(\tau),  E_{2, \chi_{3}, \chi_{8}}(\tau)\}$ & $\{\Delta_{2, 24, \chi_{24};1}(\tau), \Delta_{2, 24, \chi_{24};2}(\tau)\}$ \\
 
\hline 
\end{tabular}
\end{center}
}

\bigskip
The following eta-quotients are used to give explicit basis of respective modular spaces $M_{3}(\Gamma_0(N),\chi)$.
\smallskip
\textbf{
\begin{center}
\begin{tabular}{ll}
 $ \Delta_{3,8,\chi_{-8}}(\tau)        =  1^2 2^1 4^{1} 8^2 ,                               $ & $ \Delta_{3,12,\chi_{-3}}(\tau)       =  2^{3}6^{3} ,                                     $  \\
 $ \Delta_{3,12,\chi_{-4};1}(\tau)     =  1^{4}2^{-1}4^{1}6^{1}12^{1} ,                     $ & $ \Delta_{3,12,\chi_{-4};2}(\tau)     =  1^{1}2^{1}3^{1}6^{-1}12^{4},                     $  \\                                                                  
 $ \Delta_{3,24,\chi_{-3};1}(\tau)     =  1^{3}2^{1}3^{-1}4^{4}6^{1}8^{-3}24^{1} ,          $ & $ \Delta_{3,24,\chi_{-3};2}(\tau)     =  1^{-3}2^{4}3^{1}4^{1}8^{3}12^{1}24^{-1} ,        $  \\                      
 $ \Delta_{3, 24, \chi_{-8};1}(\tau)   =  1^{-2}2^{4}4^{4}6^{1}8^{-2}12^{1} ,               $ & $ \Delta_{3, 24, \chi_{-8};2}(\tau)   =  1^{2}4^{3}6^{3}8^{-1}12^{-2}24^{1},              $  \\
 $ \Delta_{3, 24, \chi_{-8};3}(\tau)   = 2^{3}3^{2}4^{-2}8^{1}12^{3}24^{-1},                $ & $ \Delta_{3, 24, \chi_{-8};4}(\tau)   =  1^{1}2^{1}3^{-1}4^{1}6^{2}8^{1}12^{2}24^{-1},    $  \\         
 $ \Delta_{3, 24, \chi_{-24};1}(\tau)  =  1^{-3}2^{9}3^{-1}4^{-3}6^{4}12^{-2}24^{2} ,       $ & $ \Delta_{3, 24, \chi_{-24};2}(\tau)   =  1^{-2}2^{8}6^{1}8^{-1}12^{-1}24^{1},            $  \\
 $ \Delta_{3, 24, \chi_{-24};3}(\tau)  =  1^{1}2^{-5}3^{-1}4^{11}6^{4}8^{-4}12^{-2}24^{2} , $ & $ \Delta_{3, 24, \chi_{-24};4}(\tau)   =  1^{2}2^{-6}4^{14}6^{1}8^{-5}12^{-1}24^{1} ,     $  \\
 $ \Delta_{3, 24, \chi_{-24};5}(\tau)  =  1^{1}2^{-1}3^{-5}4^{1}6^{14}12^{-6}24^{2},        $ & $ \Delta_{3, 24, \chi_{-24};6}(\tau)   =  1^{2}2^{-2}3^{-4}4^{4}6^{11}8^{-1}12^{-5}24^{1}.$  \\
 \end{tabular}
\end{center}
}
\newpage
\begin{center}
\textbf{Table B. Basis for $M_{3}(\Gamma_0(N),\chi)$}

\bigskip

\begin{tabular}{|p{1.5 cm }|p{0.7 cm}|p{0.7 cm }|p{5.9cm}|p{5.9 cm }|}

\hline

{\textbf{Space}} & \multicolumn{2}{|c|}{\textbf{Dimension}} & 
 \multicolumn{2}{|c|}{\textbf{Basis for  $M_3(\Gamma_0(N),\chi)$}}\\ \cline{2-5}
 &&&&\\
 {$(N,\chi)$} & $e_{3,N,\chi}$ & $s_{3,N,\chi}$  & {Basis for $ {\mathcal E}_3(\Gamma_0(N),\chi)$ }& {Basis for $S_3(\Gamma_0(N),\chi)$}
  \\ \hline 
&&&&\\
 $(4,\chi_{-4})$    &   2 & 0       & $\{E_{3, {\bf 1}, \chi_{-4}}(\tau),  E_{3, \chi_{-4}, {\bf 1}}(\tau)\}$                   &  $\{0\}$  \\
\hline                                                                                                                   
&&&&\\                                                                                                                   
 $(3,\chi_{-3})$    &   2 & 0       & $\{E_{3, {\bf 1}, \chi_{-3}}(\tau),  E_{3, \chi_{-3}, {\bf 1}}(\tau)\}$                   & $\{0\}$ \\
                                                                                                                         
 \hline                                                                                                                  
 &&&&\\                                                                                                                  
 $(6,\chi_{-3})$    &   4 & 0       & $\{E_{3, {\bf 1}, \chi_{-3}}(az), a|2;   E_{3, \chi_{-3}, {\bf 1}}(bz), b|2\}$      & $\{0\}$ \\
\hline                                                                                                                   
&&&&\\                                                                                                                   
$(8,\chi_{-4})$     &   4 & 0       & $\{E_{3, {\bf 1}, \chi_{-4}}(az), a|2; $ $ E_{3, \chi_{-4}, {\bf 1}}(bz), b|2\}$    & $\{0\}$  \\
                                                                                                                         
\hline                              
&&&&\\                              
$(8,\chi_{-8})$     &   2 & 1       & $\{E_{3, {\bf 1}, \chi_{-8}}(\tau),$ $ E_{3, \chi_{-8}, {\bf 1}}(\tau)\}$                 & $\{\Delta_{3, 8, \chi_{-8}}(\tau)\}$ \\
                                    
\hline                              
&&&&\\                              
$(12,\chi_{-3})$    &   6 & 1       & $\{E_{3, {\bf 1}, \chi_{-3}}(az), a|4; $ $ E_{3, \chi_{-3}, {\bf 1}}(bz), b|4\}$    & $\{\Delta_{3, 12, \chi_{-3}}(\tau) \}$ \\
                                                                                                                          
\hline                                                                                                                    
&&&&\\                                                                                                                    
$(12,\chi_{-4})$    &   4 & 2       & $\{E_{3, {\bf 1}, \chi_{-4}}(az), a|3; $ $ E_{3, \chi_{-4}, {\bf 1}}(bz), b|3\}$    & $\{ \Delta_{3, 12, \chi_{-4};1}(\tau),\Delta_{3, 12, \chi_{-4};2}(\tau) \}$ \\
                                                                                                                          
\hline                                                                                                                    
&&&&\\                                                                                                                    
$(24,\chi_{-3})$    &   8 & 4       & $\{E_{3, {\bf 1}, \chi_{-3}}(az), a|8;$ $ E_{3, \chi_{-3}, {\bf 1}}(bz), b|8\}$    & $\{\Delta_{3, 12, \chi_{-3}}(az), a|2; \Delta_{3, 24, \chi_{-3};s}(\tau)$, \\
&&&& \qquad \quad $s = 1,2 \}$ \\
\hline                                                                                                                    
&&&&\\                                                                                                                    
$(24,\chi_{-4})$    &   8 & 4       & $\{E_{3, {\bf 1}, \chi_{-4}}(az), a|6;$ $  E_{3, \chi_{-4}, {\bf 1}}(bz), b|6\}$    & $\{ \Delta_{3, 12, \chi_{-4};1}(az), a|2$; 
$\Delta_{3, 12, \chi_{-4};2}(bz), b|2 \}$ \\
\hline                                                                                                                    
&&&&\\                                                                                                                    
$(24,\chi_{-8})$    &   4 & 6       & $\{E_{3, {\bf 1}, \chi_{-8}}(az); a|3,$ $  E_{3, \chi_{-8}, {\bf 1}}(bz); b|3\}$    & $\{\Delta_{3, 8, \chi_{-8}}(az), a|3; $ $   \Delta_{3, 24, \chi_{-8};s}(\tau); 1 \leq s \leq 4 \}$ \\
\hline                                                                                                                    
&&&&\\                              
$(24,\chi_{-24})$   &   4 & 6       & $\{E_{3, {\bf 1}, \chi_{-24}}(\tau),  E_{3, \chi_{-24}, {\bf 1}}(\tau), $ $ E_{3, \chi_{-3}, \chi_{8}}(\tau),  E_{3, \chi_{8}, \chi_{-3}}(\tau)\}$ & $\{\Delta_{3, 24, \chi_{-24};r}(\tau); 1 \leq r \leq 6 \}$ \\
\hline 
\end{tabular}
\end{center}

\bigskip

In the following table, we list bases for the spaces of modular forms $M_5(\Gamma_0(24), \chi)$ for the various levels and odd characters $\chi$. 



 
\begin{center}

\textbf{Table C. Basis for $M_{5}(\Gamma_0(N),\chi)$} 

\bigskip

\begin{tabular}{|p{1.8 cm }|p{1 cm}|p{1 cm }|p{4.5cm}|p{6.5 cm }|}

\hline

{\textbf{Modular  Space}} & \multicolumn{2}{|c|}{\textbf{Dimension}} & 
 \multicolumn{2}{|c|}{\textbf{Basis for  $M_5(\Gamma_0(N),\chi)$}}\\ \cline{2-5}
 &&&&\\
 {$(N,\chi)$} & $e_{5,N,\chi}$ & $s_{5,N,\chi}$  & {Basis for $ {\mathcal E}_5(\Gamma_0(N),\chi)$ }& {Basis for $S_5(\Gamma_0(N),\chi)$}
  \\ \hline 
&&&&\\
 $(4,\chi_{-4})$  &2&1 & $
\{E_{5, {\bf 1}, \chi_{-4}}(z),  E_{5, \chi_{-4}, {\bf 1}}(z)\}
$ & $
\Delta_{5, 4, \chi_{-4}}(z)$ \\
\hline 
&&&&\\
 $(3,\chi_{-3})$  &  2 & 0   & $
\{E_{5, {\bf 1}, \chi_{-3}}(z),  E_{5, \chi_{-3}, {\bf 1}}(z)\}
$  & {0} \\
 
 \hline
 &&&&\\
 $(6,\chi_{-3})$  &  4 & 2   & $
\{E_{5, {\bf 1}, \chi_{-3}}(az); a|2,$ $  E_{5, \chi_{-3}, {\bf 1}}(bz); b|2\}
$ & $
\{\Delta_{5, 6, \chi_{-3};1}(z),$ $ \Delta_{5, 6, \chi_{-3};2}(z)\}
$ \\
\hline 
 $(8,\chi_{-4})$  &  4 & 2   & $
\{E_{5, {\bf 1}, \chi_{-4}}(az); a|2, $ $ E_{5, \chi_{-4}, {\bf 1}}(bz); b|2\}
$ & $
\{\Delta_{5, 4, \chi_{-4}}(az);a|2\}
$ \\

\hline
\end{tabular}
\end{center}

\begin{center}

\begin{tabular}{|p{1.8 cm }|p{1 cm}|p{1 cm }|p{4.5cm}|p{6.5 cm }|}

\hline

{\textbf{Modular  Space}} & \multicolumn{2}{|c|}{\textbf{Dimension}} & 
 \multicolumn{2}{|c|}{\textbf{Basis for  $M_5(\Gamma_0(N),\chi)$}}\\ \cline{2-5}
 &&&&\\
 {$(N,\chi)$} & $e_{5,N,\chi}$ & $s_{5,N,\chi}$  & {Basis for $ {\mathcal E}_5(\Gamma_0(N),\chi)$ }& {Basis for $S_5(\Gamma_0(N),\chi)$}
  \\ \hline 
  
 &&&&\\
 $(8,\chi_{-8})$  &  2 & 3    & $
\{E_{5, {\bf 1}, \chi_{-8}}(z),$ $ E_{5, \chi_{-8}, {\bf 1}}(z)\}
$ & $
\{\Delta_{5, 8, \chi_{-8};1}(z),$ $ \Delta_{5, 8, \chi_{-8};2}(z),\Delta_{5, 8, \chi_{-8};3}(z)\}
$ \\

 \hline
 &&&&\\
 $(12,\chi_{-3})$  &  6 &5    & $
\{E_{5, {\bf 1}, \chi_{-3}}(az); a|4, $ $ E_{5, \chi_{-3}, {\bf 1}}(bz); b|4\}
$ & $
\{\Delta_{5, 12, \chi_{-3};1}(z),\Delta_{5, 12, \chi_{-3};2}(z),$ $ \Delta_{5, 12, \chi_{-3};3}(z),\Delta_{5, 12, \chi_{-3};4}(z),F(z) \}
$ \\
 
 \hline 
 &&&&\\
 $(12,\chi_{-4})$  &  4 & 6   & $
\{E_{5, {\bf 1}, \chi_{-4}}(az); a|3, $ $ E_{5, \chi_{-4}, {\bf 1}}(bz); b|3\}
$ & $
\{\Delta_{5, 4, \chi_{-4}}(az);a|3 , $ $ \Delta_{5, 12, \chi_{-4};1}(z),\Delta_{5, 12, \chi_{-4};2}(z),$ $ \Delta_{5, 12, \chi_{-4};3}(z),\Delta_{5, 12, \chi_{-4};4}(z) \}
$ \\
 
 \hline 
 &&&&\\
 $(24,\chi_{-3})$  &  8 & 12   & $
\{E_{5, {\bf 1}, \chi_{-3}}(az); a|8, $ $ E_{5, \chi_{-3}, {\bf 1}}(bz); b|8\}
$ & $
\{\Delta_{5, 12, \chi_{-3};r}(z); 1 \leq r \leq 4 , $ $ \Delta_{5, 24, \chi_{-3};s}(z); 1 \leq s \leq 8 \}
$ \\
 \hline 
 &&&&\\
 $(24,\chi_{-4})$  &  8 & 12   & $
\{E_{5, {\bf 1}, \chi_{-4}}(az); a|6,$ $  E_{5, \chi_{-4}, {\bf 1}}(bz); b|6\}
$ & $
\{\Delta_{5, 4, \chi_{-4}}(tz);t|6 , \Delta_{5, 12, \chi_{-4};1}(az);a|2, $ $ \Delta_{5, 12, \chi_{-4};2}(bz);b|2, \Delta_{5, 12, \chi_{-4};3}(cz);c|2,
$ $ \Delta_{5, 12, \chi_{-4};4}(dz);d|2 \}
$ \\
 \hline 
 &&&&\\
 $(24,\chi_{-8})$  &  4 & 14   & $
\{E_{5, {\bf 1}, \chi_{-8}}(az); a|3,$ $  E_{5, \chi_{-8}, {\bf 1}}(bz); b|3\}
$ & $
\{\Delta_{5, 8, \chi_{-8};1'}(az);a|3 , \Delta_{5, 8, \chi_{-8};2}(bz);b|3, $ $ \Delta_{5, 8, \chi_{-8};3}(cz); c|3,  \Delta_{5, 24, \chi_{-8};s}(z); 1 \leq s \leq 8 \}
$ \\
 \hline 
 &&&&\\
 $(24,\chi_{-24})$  &  4 & 14   & $
\{E_{5, {\bf 1}, \chi_{-24}}(z),  E_{5, \chi_{-24}, {\bf 1}}(z), $ $ E_{5, \chi_{-3}, \chi_{8}}(z),  E_{5, \chi_{8}, \chi_{-3}}(z)\}
$ & $
\{\Delta_{5, 24, \chi_{-24};r}(z); 1 \leq r \leq 14 \}
$ \\
\hline 
\end{tabular}
\end{center}

\bigskip

where the following eta-quotients are used to give explicit basis of respective modular spaces $M_{5}(\Gamma_0(N),\chi)$.

\begin{equation*}
\begin{split}
 \Delta_{5,4,\chi_{-4};1}(z)   & =  1^4 2^2 4^4             , \quad           \Delta_{5,12,\chi_{-3};1}(z)   =  1^{-4}2^{9}3^{4}4^{-2}6^{5}12^{-2}                   \\       
 \Delta_{5,6,\chi_{-3};1}(z)   & =  1^{-1} 2^{8} 3^{3}      , \quad           \Delta_{5,12,\chi_{-3};2}(z)   =  1^{-4}2^{10}3^{4}4^{-1}6^{-2}12^{3}                  \\       
 \Delta_{5,6,\chi_{-3};2}(z)   & =  1^{3} 3^{-1} 6^{8}      , \quad           \Delta_{5,12,\chi_{-3};3}(z)   =  1^{-4}2^{12}3^{4}4^{-5}6^{-4}12^{7}                  \\       
 \Delta_{5,8,\chi_{-8};1}(z)   & =  1^2 2^9 4^{-3} 8^2      , \quad           \Delta_{5,12,\chi_{-3};4}(z)   =  2^{10}4^{-5}6^{-2}12^{7}                             \\       
 \Delta_{5,8,\chi_{-8};2}(z)   & =  1^{-2} 2^7 4^{7} 8^{-2} , \quad           \Delta_{5,12,\chi_{-3};5}(z)   = 2^{3}6^{3}* \phi_{1,2}                                \\
 \Delta_{5,8,\chi_{-8};3}(z)   & =  1^{2} 2^{-3} 4^{9} 8^{2}, \quad                                 \\
\end{split}
\end{equation*}

\begin{equation*}
\begin{split}
 \Delta_{5,12,\chi_{-4};1}(z)    &  =  1^{-2}2^{5}3^{2}4^{1}6^{3}12^{1}              , \quad  \Delta_{5,12,\chi_{-4};3}(z)    =  1^{2}2^{3}3^{-2}4^{1}6^{5}12^{1}                   \\  
 \Delta_{5,12,\chi_{-4};2}(z)    &  =  1^{1}2^{3}3^{1}4^{2}6^{5}12^{-2}              , \quad  \Delta_{5,12,\chi_{-4};4}(z)    =  1^{1}2^{5}3^{1}4^{-2}6^{3}12^{2}                   \\
 \Delta_{5, 24, \chi_{-3};1}(z)  &  =  1^{-2}2^{3}3^{2}4^{2}6^{1}24^{1}              , \quad  \Delta_{5, 24, \chi_{-3};6}(z)   =  1^{-2}2^{3}3^{2}4^{6}6^{5}8^{-2}12^{-4}24^{2}     \\
 \Delta_{5, 24, \chi_{-3};2}(z)  &  =  1^{-2}2^{3}3^{2}4^{5}6^{-3}8^{-3}12^{3}24^{5} , \quad  \Delta_{5, 24, \chi_{-3};7}(z)   =  1^{-2}2^{3}3^{2}6^{1}8^{4}12^{2}                  \\
 \Delta_{5, 24, \chi_{-3};3}(z)  &  =  1^{-2}2^{3}3^{2}4^{4}6^{1}8^{-4}12^{-2}24^{8} , \quad  \Delta_{5, 24, \chi_{-3};8}(z)   =  1^{-3}2^{1}3^{9}4^{1}6^{1}12^{1}                  \\
 \Delta_{5, 24, \chi_{-3};4}(z)  &  =  1^{-1}2^{3}3^{-1}4^{3}6^{1}8^{-4}12^{1}24^{8} , \quad  \Delta_{5,8,\chi_{-8};1'}(z)    =  1^{-6}2^{21}4^{-7}8^{2}                            \\ 
 \Delta_{5, 24, \chi_{-3};5}(z)  &  =  1^{-2}2^{5}3^{6}4^{4}6^{-3}                      \\
 \end{split}
 \end{equation*}

 \begin{equation*}
 \begin{split}
 \Delta_{5, 24, \chi_{-8};1}(z)   & =  1^{-2}2^{1}3^{4}6^{4}8^{3}12^{3}24^{-3}          , \quad   \Delta_{5, 24, \chi_{-8};5}(z)   =  1^{-2}2^{1}3^{8}4^{-1}6^{-2}8^{2}24^{4}            \\
 \Delta_{5, 24, \chi_{-8};2}(z)   & =  1^{-2}2^{1}3^{8}4^{7}6^{-6}8^{-2}12^{4}          , \quad   \Delta_{5, 24, \chi_{-8};6}(z)   =  1^{-2}2^{1}3^{8}4^{2}6^{-6}8^{-1}12^{3}24^{5}      \\
 \Delta_{5, 24, \chi_{-8};3}(z)   & =  1^{-2}2^{1}3^{8}4^{-3}6^{-2}8^{6}12^{2}          , \quad   \Delta_{5, 24, \chi_{-8};7}(z)   =  1^{-2}2^{3}3^{12}4^{1}6^{-6}8^{2}                  \\
 \Delta_{5, 24, \chi_{-8};4}(z)   & =  1^{-2}2^{1}3^{8}6^{-6}8^{3}12^{5}24^{1}          , \quad   \Delta_{5, 24, \chi_{-8};8}(z)   =  1^{-1}2^{1}3^{5}6^{-2}8^{-2}12^{1}24^{8}           \\
 \Delta_{5, 24, \chi_{-24};1}(z)  & =  1^{-4}2^{10}3^{-2}4^{-4}6^{7}8^{3}12^{3}24^{-3}  , \quad   \Delta_{5, 24, \chi_{-24};8}(z)   =  1^{-3}2^{10}3^{-1}4^{-4}6^{1}8^{-2}12^{1}24^{8}    \\
 \Delta_{5, 24, \chi_{-24};2}(z)  & =  1^{-4}2^{10}3^{2}4^{3}6^{-3}8^{-2}12^{4}         , \quad   \Delta_{5, 24, \chi_{-24};9}(z)   =  1^{-4}2^{12}3^{6}4^{-3}6^{-3}8^{2}                 \\
 \Delta_{5, 24, \chi_{-24};3}(z)  & =  1^{-4}2^{10}3^{2}4^{-7}6^{1}8^{6}12^{2}          , \quad   \Delta_{5, 24, \chi_{-24};10}(z)  =  1^{-4}2^{10}3^{2}4^{-1}6^{5}12^{-4}24^{2}          \\
 \Delta_{5, 24, \chi_{-24};4}(z)  & =  1^{-4}2^{10}3^{2}4^{-4}6^{-3}8^{3}12^{5}24^{1}   , \quad   \Delta_{5, 24, \chi_{-24};11}(z)  =  1^{1}2^{2}3^{-1}4^{3}6^{3}8^{-1}12^{2}24^{1}       \\
 \Delta_{5, 24, \chi_{-24};5}(z)  & =  1^{-4}2^{10}3^{2}4^{-5}6^{1}8^{2}24^{4}          , \quad   \Delta_{5, 24, \chi_{-24};12}(z)  =  1^{-2}2^{2}3^{4}4^{1}6^{1}12^{2}24^{2}             \\
 \Delta_{5, 24, \chi_{-24};6}(z)  & =  1^{-4}2^{10}3^{2}4^{-2}6^{-3}8^{-1}12^{3}24^{5}  , \quad   \Delta_{5, 24, \chi_{-24};13}(z)  =  2^{2}3^{6}4^{1}6^{-7}8^{2}12^{10}24^{-4}           \\
 \Delta_{5, 24, \chi_{-24};7}(z)  & =  1^{-4}2^{10}3^{2}4^{-3}6^{1}8^{-2}12^{-2}24^{8}  , \quad   \Delta_{5, 24, \chi_{-24};14}(z)  =   3^{2}4^{-3}6^{-3}8^{6}12^{12}24^{-4}              \\
\end{split}
\end{equation*}  

\section{Appendix-II}
\begin{center}
\textbf{List of forms considered for explicit formulas}
\end{center}

In this section, we give a list of forms in $4$ and $6$ variables for the purpose of providing explicit examples for our main results. 
Since the number of examples is too big, we present a few sample formulas in each case and the other formulas are expressed in terms 
of linear combination of basis elements (of the respective vector space of modular forms) and the linear combination coefficients are listed in  
tabular format at the end of this paper. \\

\smallskip
\centerline{\bf {Table 1. Forms with 4 variables}}

\bigskip
Recall that $T_{\mathcal C}$ denotes the sum of triangular numbers with coefficients in ${\mathcal C}$, given by \eqref{triangular} and 
${\mathcal M}_{s,t}$, ${\mathcal M}_{l,t}$ are the mixed forms given by the equation \eqref{st-lt}. \\
{\tiny
\begin{center}


\smallskip


}

\end{document}